\theoremstyle{plain}
\newtheorem{theorem}{Theorem}[section]
\newtheorem{lemma}[theorem]{Lemma}
\newtheorem{corollary}[theorem]{Corollary}
\newtheorem{conjecture}[theorem]{Conjecture}
\theoremstyle{definition}
\newtheorem{defn}[theorem]{Definition}
\newtheorem{claim}[theorem]{Claim}
\newtheorem*{remark}{Remark}
\newcommand\lref[1]{Lemma~\ref{lem:#1}}
\newcommand\tref[1]{Theorem~\ref{thm:#1}}
\newcommand\cref[1]{Corollary~\ref{cor:#1}}
\newcommand\cjref[1]{Conjecture~\ref{conj:#1}}
\begin{document}

\title{A discrete isodiametric result: the Erd\H{o}s-Ko-Rado theorem for multisets}

\author{Zolt\'an F\"uredi \thanks{Alfr\'ed R\'enyi Institute of Mathematics,
13--15 Re\'altanoda Street, 1053 Budapest, Hungary. 
email: z-furedi@illinois.edu.
Research supported in part by the
Hungarian National Science Foundation OTKA 104343,
and by the European Research
Council Advanced Investigators Grant 267195.} \and
D\'aniel Gerbner\thanks{Alfr\'ed R\'enyi Institute of Mathematics,
13--15 Re\'altanoda Street, 1053 Budapest, Hungary. \newline
E-mail: gerbner.daniel@renyi.mta.hu.
Research supported by the Hungarian National Scientific Fund, 
grant number: PD-109537.} \and
M\'at\'e Vizer\thanks{Alfr\'ed R\'enyi Institute of Mathematics,
13--15 Re\'altanoda Street, 1053 Budapest, Hungary. \newline
E-mail: vizermate@gmail.com
Research supported by the Hungarian National Scientific Fund, 
grant number: 83726.}}

\maketitle

\begin{abstract}
There are many generalizations of the Erd\H{o}s-Ko-Rado theorem.
We give new results (and problems) concerning families of $t$-intersecting 
$k$-element
multisets of an $n$-set and point out connections to coding theory and 
classical geometry.
We establish the conjecture that for $n \geq  t(k-t)+2$
such a family can have at most ${n+k-t-1\choose k-t}$ members.
\end{abstract}

\section{Introduction}

\subsection{The isodiametric problem}

In 1963 Mel'nikov~\cite{Me} proved that the ball has the maximal volume among all sets with a given diameter in every Banach space (of finite dimension). 
We call the problem of finding the maximal volume among the sets with given diameter in a metric space the \textit{isodiametric problem}. Various results have been achieved concerning the discrete versions of this problem.

Kleitman~\cite{Kl} as a slight generalization of a theorem of Katona~\cite{Ka} determined the maximal volume among subsets with diameter of $r$ in $\{0,1\}^n$ with the Hamming distance (that is, the distance of $(x_1,..,x_n), (y_1,...,y_n) \in \{0,1\}^n$ is $|\{i \leq n : x_i \ne y_i \}|$) and proved that it is achieved if the subsets is a ball of radius $r/2$ if $r$ is even. Ahlswede and Khachatrian~\cite{AK2} generalized this result to $[q]^n$ and solved the isodiametric problem for all $q,n$ and diameter $r$.

Du and Kleitman~\cite{DK} considered and Bollob\'as and Leader~\cite{BL} completely solved the isodiametric problem in $[k]^n$ with the $\ell_1$ distance (that is, the distance of $(x_1,..,x_n), (y_1,...,y_n) \in [q]^n$ is $\sum_{i=1}^n|x_i-y_i|$).

\subsection{Erd\H{o}s-Ko-Rado type theorems}

Let us call a set system ${\mathcal F}$ \textit{intersecting} if $|F_1 \cap F_2| \ge 1$ for all $F_1,F_2 \in {\mathcal F}$. It is easy to see that the cardinality of an intersecting set system of subsets of $[n]$ is at most $2^{n-1}$. However if we make restrictions on the cardinality of the subsets, the problem becomes more difficult.

Let us use the following notation $\binom{[n]}{k}:=\{A \subseteq [n] : |A|=k\}$. In the 1930's Erd\H{o}s, Ko and Rado proved (and published in 1961) the following theorem:

\begin{theorem}(\cite{EKR})
\label{thm:EKRorig}
If $n \geq 2k$ and ${\mathcal F} \subseteq \binom{[n]}{k}$ intersecting then

$$|{\mathcal F}|\leq \binom{n-1}{k-1}.$$
\end{theorem}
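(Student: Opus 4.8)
The plan is to use Katona's double-counting argument over cyclic orderings, which makes the role of the hypothesis $n \ge 2k$ especially transparent. Fix a cyclic ordering $\sigma$ of the ground set, i.e.\ a placement of the $n$ points around a circle, and call a $k$-set an \emph{arc} of $\sigma$ if its elements occupy $k$ consecutive positions; each cyclic ordering has exactly $n$ arcs. The heart of the argument is a local claim: for every fixed $\sigma$, at most $k$ of its $n$ arcs can belong to an intersecting family $\mathcal F$.

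To prove the claim I would argue as follows. If no arc of $\sigma$ lies in $\mathcal F$ we are done, so assume some arc $A$ does, and relabel positions so that $A=\{1,2,\dots,k\}$. Every other arc meeting $A$ begins at one of the $2(k-1)$ positions straddling an endpoint of $A$, and I would group these into pairs $(B_t,B'_t)$ for $t=1,\dots,k-1$, where $B_t$ starts at position $1+t$ (just inside the right end of $A$) and $B'_t$ starts at position $1+t-k$ (just outside the left end). Together $B_t$ and $B'_t$ occupy the $2k$ consecutive positions from $1+t-k$ to $k+t$; because $n\ge 2k$ these positions are genuinely distinct, so $B_t\cap B'_t=\emptyset$. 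Since $\mathcal F$ is intersecting, at most one arc of each pair can lie in $\mathcal F$, giving at most $k-1$ such arcs; together with $A$ this yields at most $k$ arcs in $\mathcal F$. This is precisely the step where $n\ge 2k$ is indispensable, and it is the one I expect to require the most care.

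With the claim in hand, the rest is bookkeeping via double counting of the pairs $(\sigma,A)$ with $A\in\mathcal F$ an arc of $\sigma$. Summing over the $(n-1)!$ cyclic orderings and applying the claim bounds the number of such pairs by $k\,(n-1)!$. On the other hand, by symmetry every fixed $k$-set is an arc in exactly $k!\,(n-k)!$ of the cyclic orderings, so the same count equals $|\mathcal F|\,k!\,(n-k)!$. Comparing the two expressions gives
$$|\mathcal F|\,k!\,(n-k)!\le k\,(n-1)!,$$
and dividing through yields
$$|\mathcal F|\le \frac{k\,(n-1)!}{k!\,(n-k)!}=\binom{n-1}{k-1},$$
as required. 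An alternative route would be the compression/shifting method, pushing $\mathcal F$ toward a downward-shifted family and then bounding it directly, but the cyclic argument is shorter and isolates the use of $n\ge 2k$ more cleanly, so that is the approach I would carry out.
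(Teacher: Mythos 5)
Your proof is correct: the local claim (at most $k$ arcs of any fixed cyclic ordering can lie in $\mathcal F$), the disjointness of each pair $B_t, B'_t$ via the hypothesis $n\ge 2k$, and the final double count $|\mathcal F|\,k!\,(n-k)!\le k\,(n-1)!$ are all accurately carried out; this is Katona's cycle argument in its standard form. Note, however, that the paper does not prove Theorem~\ref{thm:EKRorig} at all --- it is quoted from \cite{EKR} as a known classical result --- so there is no in-paper proof to compare against; your argument stands on its own as a complete and valid proof of the cited statement.
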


Observe that if one considers the indicator functions of subsets of $[n]$ as elements of $\{0,1\}^n$ (with Hamming or $\ell_1$ distance is the same in this case), then the intersecting property of ${\mathcal F} \subseteq \binom{[n]}{k}$ is equivalent with the fact that the diameter of the set of the indicator functions of the elements of ${\mathcal F}$ is at most $2k-1$. So as the inequality is sharp in the Erd\H{o}s-Ko-Rado theorem, it solves an isodiametric problem.

\vspace{3mm}

Let us call a set system ${\mathcal F}$ \textit{t-intersecting} if $|F_1 \cap F_2| \ge t$ for all $F_1,F_2 \in {\mathcal F}$. Erd\H{o}s, Ko and Rado also proved in the same article that if $n$ is large enough, every member of the largest $t$-intersecting family of $k$-subsets of $[n]$ contains a fixed $t$-element set, but did not give the optimal threshold. Frankl~\cite{Fr} showed for $t \ge 15$ and Wilson~\cite{Wi} for every $t$ that the optimal threshold is $n=(k-t+1)(t+1)$. Finally, Ahlswede and Khachatrian~\cite{AK} determined the maximum families for all values of $n$ and proved the following:

\begin{theorem}(\cite{AK})
\label{thm:AKmain}

Let $t\leq k \leq n$ and $\mathcal{A}_{n,k,t,i}=\{A : A \subseteq [n], |A|=k, |A \cap [t+2i]|\geq t+i \}$.

If ${\mathcal F} \subseteq \binom{[n]}{k}$ is t-intersecting, then we have
$$|{\mathcal F}|\leq \max_i|\mathcal{A}_{n,k,t,i}|=:AK(n,k,t).$$
\end{theorem}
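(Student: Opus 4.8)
The plan is to prove the bound by the method of compression (shifting): reduce an arbitrary $t$-intersecting family to a highly structured ``left-compressed'' family of the same size, and then determine the maximum over such structured families. The theorem asserts that the optimum is attained by one of the nested families $\mathcal{A}_{n,k,t,i}$, and which index $i$ wins depends on $n$, so from the outset I expect the argument to split according to the position of $n$ relative to the thresholds $n=(k-t+1)\frac{t+2i+1}{i+1}$, where consecutive families $\mathcal{A}_{n,k,t,i}$ trade places as the extremal configuration.

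First I would recall the left-shift $S_{ab}$ for $a<b$: replace each $A\in\mathcal{F}$ containing $b$ but not $a$ by $(A\setminus\{b\})\cup\{a\}$ whenever the result is not already present. A routine check shows that $S_{ab}$ preserves $|\mathcal{F}|$ and the $t$-intersecting property, and that iterating over all pairs terminates in a \emph{shifted} family (one closed under every $S_{ab}$). Hence it suffices to bound $|\mathcal{F}|$ when $\mathcal{F}$ is shifted. Second, I would exploit shiftedness to force the intersections into an initial segment, so that membership of a set $A$ is governed by the sizes $|A\cap[t+2i]|$, making the families $\mathcal{A}_{n,k,t,i}$ the natural candidate envelopes.

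To turn this into a quantitative bound I would set up an induction on $n$ with $k,t$ fixed, splitting $\mathcal{F}$ at the element $n$ into the link $\mathcal{F}_n=\{A\setminus\{n\}:n\in A\in\mathcal{F}\}$ and the rest $\mathcal{F}_{\bar n}=\{A\in\mathcal{F}:n\notin A\}$. Here $\mathcal{F}_{\bar n}$ is a $t$-intersecting family of $k$-sets on $[n-1]$, while $\mathcal{F}_n$ is a $(t-1)$-intersecting family of $(k-1)$-sets on $[n-1]$; moreover $\mathcal{F}_n$ and $\mathcal{F}_{\bar n}$ are cross-$t$-intersecting, since $n\in A$ and $n\notin B$ give $|(A\setminus\{n\})\cap B|=|A\cap B|\ge t$. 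Feeding the inductive bounds for these two pieces into $|\mathcal{F}|=|\mathcal{F}_{\bar n}|+|\mathcal{F}_n|$ is the engine of the recursion.

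The hard part will be closing this recursion, and this is exactly where the depth of Ahlswede--Khachatrian lies. Because the optimal index $i$ is not monotone in $n$, the naive induction produces the wrong constant near each threshold, and one must rule out ``exotic'' shifted families that beat every $\mathcal{A}_{n,k,t,i}$ in the transition regions. Handling this seems to require the full pushing--pulling (generating-set) machinery: one represents the up-closure of $\mathcal{F}$ by a minimal generating set, shows that any generator not conforming to an $\mathcal{A}_{n,k,t,i}$ pattern can be transformed, by a push toward the segment $[t+2i]$ together with a compensating pull, without decreasing $|\mathcal{F}|$ and without destroying the $t$-intersecting property, and then iterates to a canonical form. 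The final optimization $\max_i|\mathcal{A}_{n,k,t,i}|$ is a direct binomial computation. I expect the generating-set transformation and its monotonicity analysis, rather than the shifting or the closing count, to be the genuine obstacle.
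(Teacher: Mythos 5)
This statement is not proved in the paper at all: it is the Ahlswede--Khachatrian complete intersection theorem, quoted from \cite{AK} and used as a black box in the proof of Theorem~\ref{thm:EKRmulti}. So there is no ``paper's own proof'' to compare against; the only question is whether your proposal constitutes a proof on its own. It does not. The first two ingredients you describe --- the left-shift $S_{ab}$ preserving size and the $t$-intersecting property, and the deletion/link decomposition $|\mathcal{F}|=|\mathcal{F}_{\bar n}|+|\mathcal{F}_n|$ --- are standard and correct as far as they go, but you yourself observe that the recursion does not close: the optimal index $i$ in $\max_i|\mathcal{A}_{n,k,t,i}|$ jumps as $n$ crosses the thresholds $(k-t+1)(2+\tfrac{t-1}{i+1})$, so the naive induction fails precisely in the transition regions, and the $(t-1)$-intersecting bound for $\mathcal{F}_n$ combined with the $t$-intersecting bound for $\mathcal{F}_{\bar n}$ does not by itself reproduce the correct maximum.

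The entire mathematical content of the theorem lives in the step you defer: the generating-set formalism and the pushing--pulling argument that eliminates shifted families not of the form $\mathcal{A}_{n,k,t,i}$. Naming that machinery and stating that it ``seems to require'' a canonical-form iteration is a description of the known proof's architecture, not a proof; the monotonicity analysis of how $|\mathcal{F}|$ changes under a push toward $[t+2i]$ with a compensating pull is delicate (it occupies most of the original paper) and nothing in your proposal substitutes for it. If your goal is to use this theorem the way the present paper does --- as an ingredient for the multiset result --- citing \cite{AK} is the right move; if your goal is to actually prove it, the proposal currently stops exactly where the difficulty begins.
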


Note also that this result is also a solution to an isodiametric problem.

\vspace{5mm}

\subsection{Multiset context, definitions, notation}

We think of $k$-multisets as choosing $k$ elements of $[n]$ with repetition and without ordering 
(so there are $\binom{n+k-1}{k}$ $k$-multisets). 
Let $m(i,F)$ show how many times we chose the element $i$. 
Let us define two further equivalent representations. 

\begin{defn}
\

$\bullet$ A \textit{multiset} $F$ of $[n]$ is a sequence $(m(1,F),m(2,F),...,m(n,F)) \in \mathbb{R}^n$ of $n$ natural numbers.

We call $m(i,F)$ the \textit{multiplicity} of $i$ in $F$, $\sum_{i=1}^{n} m(i,F)$ the \textit{cardinality} of $F$. We denote the cardinality

of $F$ by $|F|$ and we say that $F$ is a \textit{k-multiset} if $|F|=k$.

\vspace{2mm}

$\bullet$ The \textit{intersection} of two multisets $G$ and $F$ is a multiset defined as

$(\min\{m(1,F),m(1,G)\},\min\{m(2,F),m(2,G)\},...,\min\{m(n,F),m(n,G)\})$.

\vspace{2mm}

$\bullet$ We will use the notation $\mathcal{M}(n,k,t):=\{\mathcal{F} : \mathcal{F}$ is $t$-intersecting set of $k$-multisets of $[n]\}$.
\end{defn}

For example $F$=(3,1,2,0,0), $G$=(2,2,0,1,1) and $F\cap G$=(2,1,0,0,0) 
with this notation, if $F=\{a,a,a,b,c,c\}$ and $G=\{a,a,b,b,d,e\}$ are $6$-multisets of a five element set (and $F\cap G=\{a,a,b\}$).

It easily follows by the definition that $k$-multisets lie on the intersection of $[k]^n$ and the hyperplane 
$\sum_{i=1}^n x_i=k$. 
Concerning cardinality of $F\cap G$ we have 
$$|F\cap G|=\sum_{i=0}^n(\min\{m(i,F),m(i,G)\}=
\sum_{i=0}^n\frac{1}{2}(m(i,F)+m(i,G)-|m(i,F)-m(i,G)|)=k-\frac{1}{2}d_{\ell_1}(F,G),$$
(where $d_{\ell_1}$ denotes the $\ell_1$ distance). 
This means that a lower bound on the cardinality of the intersection of two elements gives an upper bound on their $\ell_1$ distance. So again, an upper bound on the cardinality of a $t$-intersecting family of $k$-multisets gives a result for an isodiametric problem (on the intersection of a hyperplane and a cube).

We give a third representation of multisets, which we will use in the proofs of our theorems. 
Let $n$ and $\ell$ be positive integers and let $M(n,\ell):=\{(i,j) : 1 \leq i \leq n, 1\leq j \leq \ell\}$ be a 
 $\ell\times n$ rectangle with $\ell$ rows and $n$ columns.
We call $A \subseteq M(n,\ell)$ a $k$-\emph{multiset} if the cardinality of $A$ is $k$ and $(i,j)\in A$ implies $(i,j')\in A$ for all $j' \leq j$. Certainly $m(i,F)=\max \{s : (i,s) \in F \}$ gives the equivalence with our original definition.

\subsection{The history of $t$-intersecting $k$-multisets}

\vspace{5mm}

Brockman and Kay stated the following conjecture:

\begin{conjecture}(\cite{BK}, Conjecture 5.2.)
\label{conj:BK}

There is $n_0(k,t)$ such that if $n \geq n_0(k,t)$ and ${\mathcal F} \in \mathcal{M}(n,k,t)$, then

$$|{\mathcal F}| \leq \binom{n+k-t-1}{k-t}.$$

Furthermore, equality is achieved if and only if each member of ${\mathcal F}$ contains a fixed $t$-multiset of $M(n,k)$.
\end{conjecture}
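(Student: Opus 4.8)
The plan is to work entirely in the multiplicity model, writing a $k$-multiset $F$ as $(m(1,F),\dots,m(n,F))$, so that $|F\cap G|=\sum_{i=1}^{n}\min\{m(i,F),m(i,G)\}$ and the $t$-intersecting condition becomes $d_{\ell_1}(F,G)\le 2(k-t)$. My first move is to reduce the whole statement to a single structural fact, which I will call the \emph{Kernel Lemma}: a $t$-intersecting family of maximum size must contain a common $t$-multiset, i.e.\ there is a $t$-multiset $T$ with $m(i,T)\le m(i,F)$ for all $i$ and all $F$. Granting this, everything follows cleanly: coordinatewise subtraction $F\mapsto F-T$ injects $\mathcal F$ into the $(k-t)$-multisets of $[n]$, of which there are exactly $\binom{n+k-t-1}{k-t}$, giving the bound; and since the full star $\{F:F\supseteq T\}$ already attains this number, a maximum $\mathcal F$ must be \emph{all} of it, which is the uniqueness asserted in \cjref{BK}. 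One subtlety must be flagged here: the Kernel Lemma is genuinely false at the endpoint $n=t(k-t)+2$. For $t=1$, $k=2$, $n=3$ the ``triangle'' $\{\,\{1,2\},\{1,3\},\{2,3\}\,\}$ is intersecting, has size $3=\binom{n+k-t-1}{k-t}$, yet has no common element; this is exactly the multiset analogue of an Ahlswede--Khachatrian competitor and shows that the threshold is sharp and that uniqueness can only be expected for $n$ strictly above it.

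Before attacking the Kernel Lemma I would dispose of the cheapest idea, which does not work. One would like to ``left-compress'' by repeatedly moving one unit of multiplicity from a column $b$ to a column $a<b$, and then intersect every member with the fully compressed extreme multiset $(k,0,\dots,0)$ to read off the kernel. But such a move need not preserve the $t$-intersecting property: for $t=1$ the multisets $\{2,3\}$ and $\{3,3\}$ meet, while moving one unit of $\{2,3\}$ from column $3$ to column $1$ produces $\{1,2\}$, which is disjoint from $\{3,3\}$. Since in a compression the move is performed or blocked according to family membership, compression really can destroy intersection, so no one-line shifting proof is available and the hypothesis $n\ge t(k-t)+2$ must enter in an essential, quantitative way.

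The heart of the matter, and what I expect to be the main obstacle, is thus the Kernel Lemma, which I would prove by a Hilton--Milner/Frankl-type dichotomy. Fix a maximum family $\mathcal F$, let $T^\ast$ be a $t$-multiset lying in as many members as possible, and split $\mathcal F=\mathcal F_0\cup\mathcal F_1$ with $\mathcal F_0=\{F:F\supseteq T^\ast\}$ and $\mathcal F_1$ the remainder; if $\mathcal F_1=\varnothing$ we are done, so assume not and derive $|\mathcal F|<\binom{n+k-t-1}{k-t}$. Every member of $\mathcal F_1$ must still $t$-intersect all of $\mathcal F_0$ while missing part of $T^\ast$, and this constraint confines the ``extra'' multiplicities of each $F\in\mathcal F_1$ to a small substructure; counting the multisets compatible with it bounds $|\mathcal F_1|$ against the deficit $\binom{n+k-t-1}{k-t}-|\mathcal F_0|$. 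The tightest competitor in this comparison is the multiset analogue of the Ahlswede--Khachatrian family $\{F:|F\cap R|\ge t+1\}$ for a fixed $(t+2)$-multiset $R$ (the triangle above is its first instance), and balancing the star against this family is precisely what should yield the numerical threshold $t(k-t)+2$. I would carry out the set-theoretic part of these counts by passing to the grid $M(n,k)$, where $t$-intersecting multisets become $t$-intersecting set systems and \tref{AKmain} is available on suitable projections (e.g.\ after fixing a multiplicity profile). The two places I expect real difficulty are making the $\mathcal F_1$-count sharp enough to produce the \emph{exact} constant rather than merely a large-$n$ threshold, and confirming that the extremal competitor is the $i=1$ family rather than some intermediate configuration; overcoming these is, to my mind, the whole content of the theorem.
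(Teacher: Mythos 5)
Your reduction is exactly the paper's: show that a maximum family in $\mathcal{M}(n,k,t)$ must have a common $t$-multiset $T$, after which $F\mapsto F-T$ injects ${\mathcal F}$ into the $\binom{n+k-t-1}{k-t}$ many $(k-t)$-multisets and forces equality to mean the full star. Your side observations (the failure of naive unit-shifting to preserve $t$-intersection, and the $(t+2)$-multiset competitor showing uniqueness must fail at $n=t(k-t)+2$) are both correct and both appear in the paper. The problem is that the decisive step — your ``Kernel Lemma'' — is never actually proved. You outline a Hilton--Milner/Frankl dichotomy and then explicitly defer the two counts that would make it work (``making the ${\mathcal F}_1$-count sharp enough\dots and confirming that the extremal competitor is the $i=1$ family\dots is, to my mind, the whole content of the theorem''). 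As written this is a program, not a proof, and the program is aimed at the wrong target: the sharp threshold $t(k-t)+2$ belongs to Conjecture~\ref{conj:MP}, which the paper reaches by an entirely different mechanism (the down-compression $f$ of Theorem~\ref{thm:compression}, which forces a $t$-kernel inside $M(n,1)$ and reduces to the Ahlswede--Khachatrian theorem on $n+k-1$ points), not by a Hilton--Milner count. Whether your dichotomy can be pushed to the exact constant is precisely what you admit you cannot do.

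For the statement actually at hand — which only asks for \emph{some} $n_0(k,t)$ — the missing count is elementary, and this is how the paper closes the gap (Lemma~\ref{trivi}): if a $t$-multiset $T$ is not contained in every member, fix $F\in{\mathcal F}$ with $|F\cap T|\le t-1$; then every $G\supseteq T$ in ${\mathcal F}$ must also contain a cell of $F\setminus T$ in the grid $M(n,k)$ plus at most $k-t-1$ further cells, so $|{\mathcal F}_T|\le k\binom{nk}{k-t-1}=O_n(n^{k-t-1})$. Since any fixed $F\in{\mathcal F}$ yields a cover ${\mathcal F}=\bigcup\{{\mathcal F}_T: T\subseteq F,\ |T|=t\}$ with $O_n(1)$ terms, either some ${\mathcal F}_T$ equals ${\mathcal F}$ or $|{\mathcal F}|=O_n(n^{k-t-1})<\binom{n+k-t-1}{k-t}=\Theta(n^{k-t})$ for large $n$. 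Note that this argument needs no extremal choice of $T^\ast$ and no comparison with the Ahlswede--Khachatrian family; a crude degree-of-polynomial comparison suffices. So: right skeleton, genuinely missing quantitative core, and the version of that core you set yourself is substantially harder than what the statement requires.
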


Meagher and Purdy  partly answered this problem. 

\begin{theorem}(\cite{MP})
If $n \geq k+1$ and ${\mathcal F} \in \mathcal{M}(n,k,1)$, then

$$|{\mathcal F}|\leq \binom{n+k-2}{k-1}.$$

If $n>k+1$, then equality holds if and only if all members of ${\mathcal F}$ contain a fixed element of $M(n,k)$.
\end{theorem}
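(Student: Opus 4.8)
The plan is to reduce the problem to a weighted intersecting-family inequality for ordinary sets via the notion of \emph{support}, and then to resolve that inequality by compression and induction.

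First I would record the decisive structural observation: for two $k$-multisets $F,G$ one has $|F\cap G|\ge 1$ if and only if their supports $\mathrm{supp}(F):=\{i:m(i,F)\ge 1\}$ and $\mathrm{supp}(G)$ meet. Thus ${\mathcal F}\in\mathcal{M}(n,k,1)$ is intersecting exactly when the family of supports $\mathcal{S}:=\{\mathrm{supp}(F):F\in{\mathcal F}\}$ is an intersecting family of nonempty subsets of $[n]$. Since $n\ge k+1$, every support has size $s$ with $1\le s\le k$, and for a fixed $S$ with $|S|=s$ the number of $k$-multisets with support exactly $S$ is the number of compositions of $k$ into $s$ positive parts, namely $\binom{k-1}{s-1}$. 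Hence $|{\mathcal F}|\le\sum_{S\in\mathcal{S}}\binom{k-1}{|S|-1}$, with equality precisely when ${\mathcal F}$ contains \emph{every} multiset whose support lies in $\mathcal{S}$. This turns the theorem into a weighted statement: over all intersecting families $\mathcal{S}$ of subsets of $[n]$ of size at most $k$, the quantity $w(\mathcal{S}):=\sum_{S\in\mathcal{S}}\binom{k-1}{|S|-1}$ is maximized by (and, for $n>k+1$, only by) a star.

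Next I would verify that a star attains the target value. Taking all sets through a fixed point $x$ gives, by Vandermonde's identity, $\sum_{s=1}^{k}\binom{n-1}{s-1}\binom{k-1}{s-1}=\sum_{r\ge 0}\binom{n-1}{r}\binom{k-1}{(k-1)-r}=\binom{n+k-2}{k-1}$, exactly the claimed bound (it also equals the number of $k$-multisets through the cell $(x,1)$ of $M(n,k)$). I note in passing that $\binom{n+k-2}{k-1}$ is itself the Erd\H{o}s--Ko--Rado bound (Theorem~\ref{thm:EKRorig}) for $k$-subsets of $[n+k-1]$, and that $n\ge k+1$ is precisely the threshold $(n+k-1)\ge 2k$; nonetheless the usual stars-and-bars bijection between $k$-multisets of $[n]$ and $k$-subsets of $[n+k-1]$ does not carry multiset-intersection to set-intersection in either direction, so one cannot simply quote the set version.

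The heart of the argument is the weighted inequality for $w(\mathcal{S})$, which I would attack by the shifting (compression) method combined with induction on $n$. The $(i,j)$-shifts preserve set sizes, hence the weight $w$, and preserve the intersecting property, so one may assume $\mathcal{S}$ is left-compressed. Inducting on $n$, split $\mathcal{S}$ into $\mathcal{A}=\{S\in\mathcal{S}:n\notin S\}$ and $\mathcal{B}=\{S\setminus\{n\}:S\in\mathcal{S},\ n\in S\}$, both lying in $[n-1]$; then $\mathcal{A}$ is intersecting and, crucially, $\mathcal{A}$ and $\mathcal{B}$ are \emph{cross-intersecting}. Recording the contribution of $\mathcal{B}$ with the shifted weights $\binom{k-1}{|B|}$ reduces the bound to a weighted cross-intersecting Erd\H{o}s--Ko--Rado inequality on $[n-1]$ whose extremal configuration should be the common star. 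I expect this cross-intersecting estimate to be the main obstacle: one must show the star dominates all other cross-intersecting pairs for these two size-dependent weights, and it is exactly here that the hypothesis $n\ge k+1$ ought to enter (below it the optimum is no longer a star). Finally I would trace equality through the compressions and the cross-intersecting bound to conclude that for $n>k+1$ the only optimal support family is a star, forcing every member of ${\mathcal F}$ to contain a common cell of $M(n,k)$; at $n=k+1$ non-star optima already occur, which is why the uniqueness clause excludes that case.
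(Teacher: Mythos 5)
First, a point of reference: the paper does not prove this statement at all --- it is quoted from \cite{MP} --- so your argument can only be compared with the techniques the paper uses elsewhere (notably the blow-up device in its proof of Theorem~\ref{thm:EKRmulti}).

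Your reduction is correct and is the right first move: $1$-intersection of multisets is equivalent to intersection of supports, the count $\binom{k-1}{s-1}$ of $k$-multisets with prescribed support of size $s$ is right, and the Vandermonde evaluation of the star is right. The genuine gap is exactly where you flag it: the weighted inequality $\sum_{S\in\mathcal{S}}\binom{k-1}{|S|-1}\le\binom{n+k-2}{k-1}$ for intersecting families $\mathcal{S}$ of sets of size at most $k$ \emph{is} the whole content of the theorem, and you only offer a plan for it (left-compression, induction on $n$, and an unproved weighted cross-intersecting lemma with size-dependent weights). That lemma is itself a nontrivial EKR-type statement --- one would have to determine the optimum of $\sum_{A\in\mathcal{A}}\binom{k-1}{|A|-1}+\sum_{B\in\mathcal{B}}\binom{k-1}{|B|}$ over cross-intersecting pairs on $[n-1]$, and it is not clear the induction closes without further ideas. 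As written, the proof is incomplete.

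The gap can be closed cheaply, and by precisely the device the paper itself uses in Section 3. Adjoin an auxiliary $(k-1)$-element set $Z$ disjoint from $[n]$ and replace each $S\in\mathcal{S}$ by all sets $S\cup A$ with $A\subseteq Z$, $|A|=k-|S|$. This produces exactly $w(\mathcal{S})=\sum_{S\in\mathcal{S}}\binom{k-1}{k-|S|}$ distinct $k$-subsets of the $(n+k-1)$-element set $[n]\cup Z$, and the resulting family is intersecting because already the traces on $[n]$ intersect. Since $n\ge k+1$ means $n+k-1\ge 2k$, Theorem~\ref{thm:EKRorig} gives $w(\mathcal{S})\le\binom{n+k-2}{k-1}$, with no new shifting argument needed. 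For $n>k+1$ the ground set exceeds $2k$, so the standard uniqueness statement for Erd\H{o}s--Ko--Rado (not recorded in the paper, so it must be quoted) forces the blown-up family to be a full star at some $v$; a full star at $v\in Z$ would require $\mathcal{S}$ to contain all nonempty subsets of $[n]$ of size at most $k-1$, in particular two disjoint singletons, contradicting that $\mathcal{S}$ is intersecting. Hence $v\in[n]$, every support contains $v$, and the equality condition in $|{\mathcal F}|\le w(\mathcal{S})$ forces ${\mathcal F}$ to consist of all $k$-multisets containing $v$, i.e., the cell $(v,1)$ of $M(n,k)$. I recommend replacing your inductive programme by this two-line blow-up, keeping your (correct) support reduction and star computation.
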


They also gave a possible candidate for the threshold $n_0(k,t)$.

\begin{conjecture}(\cite{MP}, Conjecture 4.1.)
\label{conj:MP}

Let $k, n$ and $t$ be positive integers with $t\leq k$, $t(k-t)+2\leq n$
and ${\mathcal F} \in \mathcal{M}(n,k,t)$, then

$$|{\mathcal F}| \leq \binom{n+k-t-1}{k-t}.$$

Moreover, if $n > t(k - t) + 2$, then equality holds if and only if all members of ${\mathcal F}$ contain a fixed $t$-multiset of $M(n,k)$.
\end{conjecture}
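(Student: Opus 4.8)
The plan is to work entirely in the grid (staircase) representation $M(n,k)$ introduced above, where a $k$-multiset $F$ becomes the $k$-element set $\widehat F=\{(i,j):1\le j\le m(i,F)\}\subseteq M(n,k)$ whose columns are downward closed. The reason for this encoding is that multiset intersection becomes ordinary set intersection: $|\widehat F\cap\widehat G|=\sum_i\min\{m(i,F),m(i,G)\}=|F\cap G|$. Thus $\mathcal F\in\mathcal M(n,k,t)$ is precisely a $t$-intersecting family of $k$-subsets of the $nk$-element ground set $M(n,k)$, subject to the constraint that every member is a staircase. Equivalently, by the identity $|F\cap G|=k-\tfrac12 d_{\ell_1}(F,G)$ recorded above, we are bounding the largest ``anticode'' of $\ell_1$-diameter $2(k-t)$ on the slice $\sum_i x_i=k$. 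I would keep both pictures in mind but run the argument in the grid.

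First I would reduce to a highly structured family by compression. For $i<j$ define the left-shift that moves one unit of multiplicity from column $j$ to column $i$ whenever the result is still a staircase, and let $S_{ij}$ act on $\mathcal F$ in the usual way (replacing a member by its shift only when the shift is not already present). The two facts to verify — each a short, routine check in the grid model — are that $S_{ij}$ preserves $|\mathcal F|$ and preserves the $t$-intersecting property. Iterating over all pairs $i<j$ terminates in a \emph{left-compressed} family, one closed under every single left-shift. Such a family is a down-set in the left-dominance order, so from any member one may push all mass onto element $1$; in particular the extreme multiset $(k,0,\dots,0)$ lies in $\mathcal F$, and the candidate extremal family ``every member contains a fixed $t$-multiset'' takes, after compression, the clean form $\mathcal T=\{F:m(1,F)\ge t\}$, which is visibly $t$-intersecting and has exactly $\binom{n+k-t-1}{k-t}$ members.

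The heart of the matter is the extremal estimate for compressed families: if $\mathcal F$ is left-compressed, $t$-intersecting, and $n\ge t(k-t)+2$, then $|\mathcal F|\le|\mathcal T|=\binom{n+k-t-1}{k-t}$. I would prove this by induction on $n$, splitting $\mathcal F$ at the last column into $\mathcal F_0=\{F:m(n,F)=0\}$, a $t$-intersecting family of $k$-multisets on $[n-1]$, and $\mathcal F_1=\{F:m(n,F)\ge1\}$ with link $\mathcal F_1'=\{F-e_n:F\in\mathcal F_1\}$. Removing one unit of column $n$ from two members of $\mathcal F_1$ drops their intersection by at most one, so $\mathcal F_1'$ is only $(t-1)$-intersecting; but compression injects $\mathcal F_1$ into $\mathcal F_0$ and supplies the cross-condition that each $F\in\mathcal F_0$ is $t$-intersecting with the $[n-1]$-restriction of each $G\in\mathcal F_1$. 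The inductive bound $|\mathcal F_0|\le\binom{n+k-t-2}{k-t}$ together with the target $\binom{n+k-t-1}{k-t}=\binom{n+k-t-2}{k-t}+\binom{n+k-t-2}{k-t-1}$ via Pascal's identity reduces everything to showing $|\mathcal F_1|\le\binom{n+k-t-2}{k-t-1}$, i.e.\ that the cross-condition cuts $\mathcal F_1$ down from its naive $(t-1)$-intersecting size to a $t$-intersecting count. This is exactly where the hypothesis $n\ge t(k-t)+2$ must be spent, and I expect it to surface as the crossover at which $\mathcal T$ ties the best genuinely non-trivial compressed family — an Ahlswede--Khachatrian-type family built around a $t\times(k-t)$ block of cells, whose size should meet $|\mathcal T|$ precisely at $n=t(k-t)+2$. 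Pinning this crossover down, and controlling all base cases of the induction, is the step I expect to be the real obstacle.

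Finally, for the equality characterization when $n>t(k-t)+2$ I would trace the inductive inequalities backwards to force a compressed extremal family to equal $\mathcal T$, and then \emph{un-compress}: since a single shift can be an equality case only when it maps the extremal family to itself, one argues that the original uncompressed $\mathcal F$ already has all members containing one common $t$-multiset. The subtlety is that compression forgets \emph{which} $t$-multiset was shared, so the reverse-shifting analysis has to reconstruct it; and at the boundary $n=t(k-t)+2$ the crossover family ties $\mathcal T$, which is exactly why uniqueness can be claimed only in the strict range $n>t(k-t)+2$.
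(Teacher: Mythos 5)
Your argument breaks at its very first step. The compression you propose to rely on --- the left-shift $S_{ij}$ that ``moves one unit of multiplicity from column $j$ to column $i$'' --- does \emph{not} preserve the $t$-intersecting property, and the paper says so explicitly in the Remark following Lemma~\ref{lem:leftcompressing} (the operation $c'_{i,j}$ there is exactly your $S_{ij}$). A concrete failure: take $n=4$, $k=3$, $t=2$ and $\mathcal F=\{(0,3,0,0),(1,2,0,0)\}$, which is $2$-intersecting. Applying $S_{1,2}$ leaves $(0,3,0,0)$ fixed (its shift $(1,2,0,0)$ is already present) but replaces $(1,2,0,0)$ by $(2,1,0,0)$, and $|(0,3,0,0)\cap(2,1,0,0)|=1<2$. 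So the ``short, routine check'' you defer to is false, and everything downstream (left-compressed families being down-sets, $(k,0,\dots,0)\in\mathcal F$, the reduction of the extremal family to $\mathcal T=\{F:m(1,F)\ge t\}$) has no foundation. The paper's usable column shift is the full column exchange $c_{i,j}$, which does preserve $t$-intersection, but the authors report that even with it they could not push the threshold below $\Omega(kt\log k)$ --- which is also a warning sign for your plan, since you are hoping an even weaker (indeed invalid) shift will reach $t(k-t)+2$.

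Beyond that, the quantitative core of your induction --- showing $|\mathcal F_1|\le\binom{n+k-t-2}{k-t-1}$ and locating the crossover at $n=t(k-t)+2$ --- is exactly the part you acknowledge you cannot yet do, so even granting a correct compression the proof is not there. For contrast, the paper's route is quite different: it uses a \emph{down}-compression (Theorem~\ref{thm:compression}) that pushes multiplicities into the first row of the grid so that $M(n,1)$ becomes a $t$-kernel whenever $n\ge 2k-t$, and then converts $f(\mathcal F)$ into a $t$-intersecting family of ordinary $k$-sets on an $(n+k-1)$-element ground set (each $s$-element trace on $M(n,1)$ is blown up by all $\binom{k-1}{k-s}$ extensions into $k-1$ new dummy elements). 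The bound then falls out of the Ahlswede--Khachatrian complete intersection theorem, and the threshold $t(k-t)+2$ is not something to be rediscovered by induction: it is precisely the condition $(t+1)(k-t+1)\le n+k-1$ under which $AK(n+k-1,k,t)=\binom{n+k-t-1}{k-t}$. If you want to salvage your approach, you would need to replace $S_{ij}$ by an operation that provably preserves $t$-intersection and then find a genuinely new argument for the crossover; otherwise the reduction to the known set-system theorem is the efficient path.
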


Note that if $n < t(k - t) + 2$, then the family consisting of all multisetsof $M(n,k)$  which contain a fixed $t$-multiset of $M(n,k)$ still has cardinality $\binom{n+k-t-1}{k-t}$, but cannot be the largest. Indeed, if we fix a $(t+2)$-multiset $T$ and consider the family of the multisets $F$ with $|F \cap T|\ge t+1$, we get a larger family.

\subsection{The main result: extremal families have kernels}

\

A multiset $T$ is called a \textit{$t$-kernel} of the multiset family ${\mathcal F}$ if $|F_1 \cap F_2 \cap T|\geq t$ holds for all $F_1,F_2 \in {\mathcal F}$. Obviously such a family ${\mathcal F}$ is $t$-intersecting.
\cjref{BK} claims that an extremal ${\mathcal F} \in {\mathcal M}(n,k,t)$ has a $t$ element kernel, whenever $n$ is large. We will show that the general situation is more complex and determine the size of the maximal $t$-intersecting families for all $n \ge 2k-t$.

The main idea of our proof is the following: instead of the well-known \textit{left-compression} operation, which is a usual method in the theory of intersecting families, we define (in two different ways) an operation (we denote it by $f$ in the next theorem) on $\mathcal{M}(n,k,t)$ which can be called a kind of \textit{down-compression}.

\begin{theorem}
\label{thm:compression}
Let $1 \leq t \leq k$, $2k-t \leq n$ be arbitrary. There exists $$f: \mathcal{M}(n,k,t) \rightarrow \mathcal{M}(n,k,t)$$ satisfying the following properties:

\vspace{3mm}

$(i)$ $|\mathcal{F}|=|f(\mathcal{F})|$ for all $\mathcal{F} \in \mathcal{M}(n,k,t)$,

\vspace{1mm}

$(ii)$ $M(n,1)$ is a $t$-kernel for $f(\mathcal{F})$,

\vspace{1mm}

$(iii)$ moreover, the maximum height does not increase, 
  $\max\{ m(i,F): i\in [n], F\in \mathcal{F} \}\geq \max\{ m(i,F): i\in [n], F\in f(\mathcal{F}) \}$.
\end{theorem}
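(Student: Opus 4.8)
The plan is to realise $f$ by iterating a single local \emph{down-compression}, working throughout in the staircase representation $A\subseteq M(n,\ell)$. Write $\mathrm{supp}(F)=\{i:m(i,F)\ge 1\}$; then $|F_1\cap F_2\cap M(n,1)|=|\mathrm{supp}(F_1)\cap\mathrm{supp}(F_2)|$, so $(ii)$ is precisely the assertion that $|\mathrm{supp}(F_1)\cap\mathrm{supp}(F_2)|\ge t$ for all $F_1,F_2\in f(\mathcal F)$. For an ordered pair of columns $(a,b)$, define $D_{a,b}$ on one multiset $F$ as follows: if $m(a,F)=0$ and $m(b,F)\ge 2$, move one unit off the top of column $b$ to the bottom cell of column $a$ (set $m(a)=1$, $m(b)\mapsto m(b)-1$, leaving all other coordinates fixed); otherwise leave $F$ unchanged. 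A moved unit always lands at height $1$ while its source column only shrinks, so no column ever rises above the current maximum and $(iii)$ holds for every move, hence for $f$. Lifting $D_{a,b}$ to a family by the standard injective rule
$$D_{a,b}(\mathcal F)=\{D_{a,b}(F): F\in\mathcal F,\ D_{a,b}(F)\notin\mathcal F\}\cup\{F: F\in\mathcal F,\ D_{a,b}(F)\in\mathcal F\}$$
leaves the number of members unchanged, giving $(i)$.

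The substantive point is that each $D_{a,b}$ maps $\mathcal M(n,k,t)$ into itself, which I would establish by the usual case analysis on a pair $F_1,F_2$ with $|F_1\cap F_2|\ge t$. If neither member moves, or if the non-moving member already meets column $a$ or has multiplicity at most $1$ in column $b$, the intersection does not drop and there is nothing to prove. The only real difficulty is the \emph{asymmetric blocked} case, where $F_1$ moves to $D_{a,b}(F_1)\notin\mathcal F$ while $F_2$ stays solely because $D_{a,b}(F_2)$ is already present; there the intersection of $D_{a,b}(F_1)$ with the unmoved $F_2$ can fall to $t-1$, and the naive exchange through $D_{a,b}(F_2)$ fails to rescue it when $|F_1\cap F_2|=t$. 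Overcoming this is exactly where the hypothesis $n\ge 2k-t$ must be used, via the room estimate
$$|\mathrm{supp}(F_1)\cup\mathrm{supp}(F_2)|\le |F_1|+|F_2|-|F_1\cap F_2|\le 2k-t\le n,$$
together with a more careful, possibly global, modification of the family; this is the heart of the matter.

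To construct $f$, apply the operators $D_{a,b}$ over all ordered pairs, repeatedly, until none of them changes the family; this terminates because every genuine move strictly decreases the nonnegative integer potential
$$\Phi(\mathcal F)=\sum_{F\in\mathcal F}\sum_{i=1}^n\max\{m(i,F)-1,0\}=\sum_{F\in\mathcal F}\bigl(k-|\mathrm{supp}(F)|\bigr),$$
the total excess height. It remains to show that a fixed point $f(\mathcal F)$ has $M(n,1)$ as a $t$-kernel. If not, pick $F_1,F_2$ with $|\mathrm{supp}(F_1)\cap\mathrm{supp}(F_2)|\le t-1$ yet $|F_1\cap F_2|\ge t$; since only common-support columns contribute to $F_1\cap F_2$, pigeonhole yields a column $b$ with $\min\{m(b,F_1),m(b,F_2)\}\ge 2$. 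This stacked column forces the combined support to at most $2k-t-1<n$, so some column $a$ is empty in both; the down-move at $(a,b)$ applied to both members trades one stacked shared unit at $b$ for two spread shared units at $a$ and $b$, leaving the multiset intersection unchanged (so the family stays $t$-intersecting) while strictly lowering $\Phi$ --- contradicting that $f(\mathcal F)$ is a fixed point, modulo the same blocking caveat, which one dispels by descending along $\Phi$ to an unblocked witness.

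In short, the skeleton --- define a height-lowering down-compression, iterate to a $\Phi$-minimal fixed point, and read off the kernel property --- is routine, and properties $(i)$ and $(iii)$ are essentially immediate. The real work, and the place where the threshold $n\ge 2k-t$ must be exploited sharply, is proving that the compression preserves $t$-intersection in the asymmetric blocked case; I expect this single lemma to absorb most of the effort, and the equality case $n=2k-t$ to demand the most care.
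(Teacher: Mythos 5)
There is a genuine gap, and it is exactly where you flag it: the claim that $D_{a,b}$ maps $\mathcal M(n,k,t)$ into itself is not merely unproven in your write-up --- it is false for the operation as you define it. Take $n=4$, $k=3$, $t=2$ and $\mathcal F=\{(0,2,1,0),(0,3,0,0),(1,2,0,0)\}$, which is $2$-intersecting. Applying $D_{1,2}$: the first member moves to $(1,1,1,0)\notin\mathcal F$, the second is blocked because $(1,2,0,0)\in\mathcal F$, and the third is immovable; the resulting family contains $(1,1,1,0)$ and $(0,3,0,0)$, whose intersection has size $1<t$. This is precisely your ``asymmetric blocked'' case with $m(b,F_1)<m(b,F_2)$, and no injective lifting of this single-unit local move can repair it --- the fix genuinely requires changing the operation, not the bookkeeping. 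Your closing argument for the kernel property at a fixed point inherits the same defect: a fixed point of the family-level operators only tells you that every movable member is blocked, so ``descending along $\Phi$ to an unblocked witness'' has no content without a structural description of what blocked configurations look like.

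The paper escapes this trap by making the compression non-local in one of two ways. In the first proof, for each pair of columns $(i,j)$ it symmetrizes an entire fiber $\mathcal F_g$ (all members agreeing outside columns $i$ and $j$) at once, translating the fiber into a family of subintervals of $\{1,\dots,2k\}$ and pushing them to the middle; the interval lemma then controls the minimum pairwise intersection of two whole fibers simultaneously, which is what defeats the blocking problem. In the second proof, the operation $S(i,s)(j,1)$ moves the entire top segment of a column sitting above a maintained $t$-kernel $T\supseteq M(n,1)$ down to a single bottom cell $(j,1)$, and --- crucially --- intersections are measured inside $T$ rather than as full multiset intersections, so the move removes at most one cell of $F\cap G\cap T$ while adding the cell $(j,1)\in T$; the kernel $T$ is then shrunk one top cell at a time until it equals $M(n,1)$. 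Your room estimate $|\mathrm{supp}(F_1)\cup\mathrm{supp}(F_2)|\le 2k-t\le n$ is indeed the right use of the hypothesis and appears in both of the paper's arguments, but it only supplies the empty column; the heart of the theorem is designing a compression for which the exchange argument actually closes, and that is the piece your proposal leaves open.
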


\vspace{5mm}

Using \tref{compression} we prove the following theorem which not only verifies \cjref{MP}, but also gives the maximum cardinality of $t$-intersecting families of multisets even in the case $2k-t \leq n < t(k-t)+2$.

\begin{theorem}
\label{thm:EKRmulti}

Let $1 \leq t \leq k$ and $2k-t \leq n$. If $\mathcal{F} \in \mathcal{M}(n,k,t)$ then $$|\mathcal{F}| \leq AK(n+k-1,k,t),$$
where the $AK$ function is defined in $\tref{AKmain}$.
\end{theorem}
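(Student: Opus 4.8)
The plan is to combine \tref{compression} with the classical Ahlswede--Khachatrian bound of \tref{AKmain}. Given any $\mathcal{F}\in\mathcal{M}(n,k,t)$, I would first apply the map $f$ of \tref{compression} to replace $\mathcal{F}$ by $\mathcal{G}:=f(\mathcal{F})$, which by property $(i)$ has the same cardinality and by property $(ii)$ has $M(n,1)$ as a $t$-kernel; so it suffices to bound families $\mathcal{G}$ for which $|F_1\cap F_2\cap M(n,1)|\ge t$ for all $F_1,F_2\in\mathcal{G}$. The point is that $F\cap M(n,1)$ records exactly the support $\mathrm{supp}(F)=\{i:m(i,F)\ge 1\}$, so the $t$-kernel condition says precisely that the supports of any two members, viewed as subsets of $[n]$, meet in at least $t$ elements. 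The goal then becomes to transfer this support-intersection into a genuine $t$-intersection of $k$-sets on a ground set of size $n+k-1$, so that \tref{AKmain} applies.

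The bridge is a bijection $\psi$ from the $k$-multisets of $[n]$ to the $k$-subsets of $[n+k-1]$, designed so that support elements occupy canonical positions. Concretely, I would split $[n+k-1]$ into a low block $[n]$ and a high block $\{n+1,\dots,n+k-1\}$, set $\psi(F)\cap[n]=\mathrm{supp}(F)$ (placing the element $i$ of the support at the fixed position $i$), and encode the $k-s$ repeated copies, where $s:=|\mathrm{supp}(F)|$ --- equivalently the reduced multiplicities $m(i,F)-1$ on $\mathrm{supp}(F)$ --- as a subset of the high block via the standard stars-and-bars bijection between $(k-s)$-multisets of an $s$-set and $(k-s)$-subsets of a $(k-1)$-set (both families having size $\binom{k-1}{s-1}$). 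A short count shows $\psi$ is a bijection: every $k$-subset $A\subseteq[n+k-1]$ has $A\cap[n]\neq\emptyset$ since the high block has only $k-1$ elements, so $s:=|A\cap[n]|\ge 1$, and the high part has exactly $k-s$ elements, which decodes uniquely. The essential feature --- and the reason the usual bijection must be modified --- is that $\psi$ pins each support element $i$ to position $i$ irrespective of the rest of $F$; the naive bijection that adds the running index to the sorted coordinates lacks this property and can send support-intersecting multisets to disjoint sets.

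With $\psi$ in hand the verification is immediate: since $\psi(F)\cap[n]=\mathrm{supp}(F)$ for every $F$, we have $\psi(F_1)\cap\psi(F_2)\supseteq\mathrm{supp}(F_1)\cap\mathrm{supp}(F_2)$, whence $|\psi(F_1)\cap\psi(F_2)|\ge|\mathrm{supp}(F_1)\cap\mathrm{supp}(F_2)|\ge t$ for all $F_1,F_2\in\mathcal{G}$ by the kernel condition. Thus $\psi(\mathcal{G})$ is a $t$-intersecting family of $k$-subsets of the $(n+k-1)$-element set $[n+k-1]$, and being a bijective image, $|\psi(\mathcal{G})|=|\mathcal{G}|=|\mathcal{F}|$. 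Applying \tref{AKmain} to $\psi(\mathcal{G})$ then yields $|\mathcal{F}|=|\psi(\mathcal{G})|\le AK(n+k-1,k,t)$, as claimed. I expect the one genuinely delicate step to be the design and verification of $\psi$ --- proving both that it is a bijection and that the high-block encoding never disturbs the low-block positions; once the pinning identity $\psi(F)\cap[n]=\mathrm{supp}(F)$ is secured, the transfer of the kernel condition to set-$t$-intersection and the appeal to \tref{AKmain} are routine. Note that only properties $(i)$ and $(ii)$ of \tref{compression} are needed for this inequality; the height bound $(iii)$ plays no role here.
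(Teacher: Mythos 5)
Your proof is correct and follows essentially the same route as the paper: apply $f$ from \tref{compression} to make $M(n,1)$ a $t$-kernel, transfer the resulting support-intersecting family to a $t$-intersecting family of $k$-sets on $n+k-1$ points by keeping the support in $[n]$ and encoding the excess multiplicities in an extra $(k-1)$-element block, and invoke \tref{AKmain}. The only cosmetic difference is that you build an explicit bijection $\psi$, while the paper bounds $|f(\mathcal{F})|$ from above by the family obtained by attaching \emph{all} $(k-s)$-element subsets of the extra block to each $s$-element support; both give the same inequality.
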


Beside proving \tref{EKRmulti} our aim is to present the most powerful techniques of extremal hypergraph theory, namely the kernel and the shifting methods.

\subsection{Warm up before the proofs}

\

Let us remark that one can relatively easily verify \cjref{BK} for very large $n$.
Let $T$ be a $t$-multiset. For any family ${\mathcal F}$ let ${\mathcal F}_T=\{F \in {\mathcal F} : T \subseteq F\}$.


\begin{lemma}\label{trivi} Let ${\mathcal F}$ be a $t$-intersecting family of multisets and $T$ be an arbitrary $t$-multiset. Then either ${\mathcal F}_T={\mathcal F}$ or $|{\mathcal F}_T|=O_n(n^{k-t-1})$.
\end{lemma}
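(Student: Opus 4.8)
The plan is to prove the dichotomy directly from the $t$-intersecting hypothesis. If every member of $\mathcal{F}$ contains $T$ then $\mathcal{F}_T=\mathcal{F}$ and we are in the first alternative, so I assume there is some $F_0\in\mathcal{F}\setminus\mathcal{F}_T$ and fix it once and for all. Since $T\not\subseteq F_0$, there is a coordinate where the multiplicity of $T$ strictly exceeds that of $F_0$, and hence $|T\cap F_0|=\sum_i\min(m(i,T),m(i,F_0))\leq t-1$. The whole point is that this single ``defective'' witness $F_0$ constrains every member of $\mathcal{F}_T$ strongly enough to save a factor of $n$ in the count.

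The key structural step is the following. Put $S=\{i\in[n]:m(i,F_0)>m(i,T)\}$; since $S$ is contained in the support of the $k$-multiset $F_0$ we have $|S|\leq k$, a bound independent of $n$. For an arbitrary $F\in\mathcal{F}_T$ let $G$ be the $(k-t)$-multiset defined by $m(i,G)=m(i,F)-m(i,T)\geq 0$. The $t$-intersecting property gives $|F\cap F_0|\geq t$, while $|T\cap F_0|\leq t-1$. Comparing these coordinatewise, the quantity $\min(m(i,T)+m(i,G),m(i,F_0))-\min(m(i,T),m(i,F_0))$ is zero unless $i\in S$, and for $i\in S$ it is at most $m(i,G)$; summing over $i$ yields $1\leq |F\cap F_0|-|T\cap F_0|\leq\sum_{i\in S}m(i,G)$. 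Thus every $F\in\mathcal{F}_T$ satisfies $m(i,F)>m(i,T)$ for at least one $i\in S$, i.e. the multiset $G=F\setminus T$ has positive multiplicity somewhere on the fixed bounded set $S$.

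Finally I count. The map $F\mapsto F\setminus T$ is an injection of $\mathcal{F}_T$ into the collection of $(k-t)$-multisets of $[n]$ whose support meets $S$, and the number of these is the total number of $(k-t)$-multisets minus those supported on $[n]\setminus S$, namely $\binom{n+k-t-1}{k-t}-\binom{n-|S|+k-t-1}{k-t}$. Both binomials are polynomials in $n$ of degree $k-t$ with the same leading coefficient $1/(k-t)!$, so their difference has degree at most $k-t-1$; as $|S|\leq k$, the implied constant depends only on $k$ and $t$. This gives $|\mathcal{F}_T|=O_n(n^{k-t-1})$ and establishes the second alternative.

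I expect the only delicate point to be the coordinatewise comparison in the middle step: one must verify that replacing $T$ by $F$ (whose multiplicities are $m(i,T)+m(i,G)$) can enlarge the intersection with $F_0$ only at coordinates of $S$, and there by no more than $m(i,G)$. Once this localization is in place, the remaining estimate is the elementary observation that $\binom{n+c}{d}$ has leading term $n^d/d!$ independently of $c$, so a difference of two such binomials drops a full power of $n$.
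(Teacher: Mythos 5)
Your proof is correct and rests on the same key idea as the paper's: a witness $F_0$ with $|T\cap F_0|\le t-1$ forces every member of $\mathcal{F}_T$ to contain something beyond $T$ from a region of size bounded independently of $n$, which saves a factor of $n$ in the count. You execute the localization more carefully (via the set $S$ and the coordinatewise comparison) and finish with a difference of binomial coefficients instead of the paper's direct product bound $k\binom{nk}{k-t-1}$, but these are only cosmetic differences.
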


\begin{proof} If ${\mathcal F}_T \neq {\mathcal F}$, then there is a multiset $F\in {\mathcal F}$ which does not contain $T$, hence $|F \cap T| \le t-1$. Every member of ${\mathcal F}_T$ contains $T$, one element of $F \setminus T$, and at most $k-t-1$ further elements.
The element of $F \setminus T$ can be chosen less than $k$ ways, and the other $k-t-1$ elements have to be chosen out of the $nk$ elements of the rectangle $M(n,k)$. There are at most $k\times {nk\choose k-t-1}=O_n(n^{k-t-1})$ ways to do that.
\end{proof}

\begin{corollary}[Conjecture~\ref{conj:BK}]

There is $n_0(k,t)$ such that if $n \geq n_0(k,t)$ and ${\mathcal F} \in \mathcal{M}(n,k,t)$, then
$$
  |{\mathcal F}| \leq \binom{n+k-t-1}{k-t}.$$
Furthermore, equality is achieved if and only if each member of ${\mathcal F}$ contains a fixed $t$ elements.
\end{corollary}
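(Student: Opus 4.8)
The plan is to bound $|\mathcal{F}|$ by covering $\mathcal{F}$ with boundedly many subfamilies of the form $\mathcal{F}_T$ and then feeding each into the dichotomy of Lemma~\ref{trivi}. First I would fix an arbitrary member $F_0 \in \mathcal{F}$. Since $\mathcal{F}$ is $t$-intersecting, every $F \in \mathcal{F}$ satisfies $|F \cap F_0| \geq t$, so $F$ contains some $t$-sub-multiset $T$ of $F_0$ (take any $t$ elements of $F \cap F_0$). As $F_0$ occupies only $k$ cells of the rectangle $M(n,k)$, the number of its $t$-sub-multisets is at most $\binom{k}{t}$, a constant independent of $n$. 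Hence
$$\mathcal{F} = \bigcup_{\substack{T \subseteq F_0 \\ |T| = t}} \mathcal{F}_T,$$
a union of at most $\binom{k}{t}$ subfamilies.

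Next I would apply Lemma~\ref{trivi} to each such $T$, splitting into two cases. If for some $T$ we have $\mathcal{F}_T = \mathcal{F}$, then every member of $\mathcal{F}$ contains the fixed $t$-multiset $T$; since the family of \emph{all} $k$-multisets of $[n]$ containing $T$ has exactly $\binom{n+k-t-1}{k-t}$ members (one adds $k-t$ further elements with repetition), the bound $|\mathcal{F}| \leq \binom{n+k-t-1}{k-t}$ follows at once, and equality forces $\mathcal{F}$ to be precisely this full ``star''. Otherwise $\mathcal{F}_T \neq \mathcal{F}$ for every admissible $T$, so Lemma~\ref{trivi} yields $|\mathcal{F}_T| = O_n(n^{k-t-1})$ for each of the at most $\binom{k}{t}$ sets $T$, and therefore
$$|\mathcal{F}| \leq \sum_{T} |\mathcal{F}_T| = O_n\!\left(n^{k-t-1}\right).$$

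The decisive comparison is that $\binom{n+k-t-1}{k-t}$ is a polynomial in $n$ of degree $k-t$ with positive leading coefficient $1/(k-t)!$, whereas the second-case bound has degree at most $k-t-1$. Thus there is a threshold $n_0(k,t)$ beyond which $O_n(n^{k-t-1}) < \binom{n+k-t-1}{k-t}$, so for $n \geq n_0(k,t)$ the second case gives \emph{strict} inequality and equality can only occur in the first case. This simultaneously proves the upper bound and its characterization: equality forces some $T$ with $\mathcal{F}_T = \mathcal{F}$, and there $\mathcal{F}$ must coincide with the full family of $k$-multisets through the fixed $t$-multiset $T$. (The degenerate case $t=k$ is trivial, since then $t$-intersecting means every two members are equal.)

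I do not expect a serious obstacle here --- as the paper itself notes, \cjref{BK} is ``relatively easy'' for large $n$ --- but the one point requiring care is the uniformity of the threshold. One must ensure that the implied constant in the $O_n(\cdot)$ bound of Lemma~\ref{trivi} depends only on $k$ and $t$, not on the particular family $\mathcal{F}$, so that a single $n_0(k,t)$ works for all $\mathcal{F} \in \mathcal{M}(n,k,t)$ at once. This is guaranteed because the proof of Lemma~\ref{trivi} supplies the explicit bound $k\binom{nk}{k-t-1}$, whose constant is a function of $k,t$ alone; summing over the at most $\binom{k}{t}$ choices of $T$ then keeps the estimate uniform.
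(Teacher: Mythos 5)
Your proposal is correct and follows essentially the same route as the paper: fix a member $F_0$, cover $\mathcal{F}$ by the at most $\binom{k}{t}$ subfamilies $\mathcal{F}_T$ with $T$ a $t$-sub-multiset of $F_0$, and invoke the dichotomy of Lemma~\ref{trivi} to conclude that either some $\mathcal{F}_T=\mathcal{F}$ (giving the star bound and the equality case) or $|\mathcal{F}|=O_n(n^{k-t-1})$, which is beaten by the degree-$(k-t)$ polynomial $\binom{n+k-t-1}{k-t}$ for large $n$. Your added remark on the uniformity of the implied constant is a correct and worthwhile clarification of what the paper leaves implicit.
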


\begin{proof} Let ${\mathcal F} \in \mathcal{M}(n,k,t)$ of maximum cardinality. If ${\mathcal F}_T={\mathcal F}$ for a $t$-multiset $T$, the statement follows. If not, then let us fix an $F \in {\mathcal F}$. Every member of ${\mathcal F}$ contains a $t$-multiset which is also contained in $F$, hence $\bigcup \{{\mathcal F}_T: T\subset F, |T|=t\}={\mathcal F}$. Thus $|{\mathcal F}| \le \sum_{T\subset F, |T|=t} |{\mathcal F}_T|$. By Lemma~\ref{trivi} $|{\mathcal F}_T|=O_n(n^{k-t-1})$, and there are ${k \choose t}$ members of the sum, hence $|{\mathcal F}| \le {k \choose t}O_n(n^{k-t-1}) < \binom{n+k-t-1}{k-t}$ if $n$ is large enough.
\end{proof}

\vspace{5mm}

To attack~\cjref{MP} at first we developed a straight-forward generalization of shifting. However, we could not give a threshold below $\Omega(kt \log k)$ using this method. Still we believe it is worth mentioning, as it might be useful solving other related problems.

\vspace{2mm}

For $F \subseteq M(n,\ell)$
$k$-multiset and $i < j$ let us suppose $m(j,F)-m(i,F)>0$. Let $F'$ be the result if we exchange column $j$ and column $i$, i.e., $F'=F \setminus \{(j,m(i,F)+1),\dots,(j,m(j,F))\} \cup \{(i,m(i,F)+1),\dots,(i,m(j,F))\}$. Let ${\mathcal F}\in\mathcal{M}(n,k,t)$ and $F \in {\mathcal F}$. Define

\vspace{2mm}

$c_{i,j}(F) = \left\{ \begin{array}{lll}
F^{'} & \textrm{if}$ $m(F,j)-m(F,i)>0 $ and $
F^{'} \not \in {\mathcal F}\\
F & $ otherwise. $
\end{array} \right.$

\vspace{2mm}

Let us use the following notation: $c_{i,j}({\mathcal F})=\{c_{i,j}(F) : F \in {\mathcal F}\}.$ Note that this is the same as the well-known shifting operation in case ${\mathcal F}\in \mathcal{M}(n,1,k,t)$, where ${\mathcal M}(n,\ell,k,t)$ denotes those elements ${\mathcal F}$ of ${\mathcal M}(n,k,t)$, where $\max_{i \le n} m(i,F) \le \ell$ for all $F\in {\mathcal F}$.

\vspace{2mm}

\begin{lemma}
\label{lem:leftcompressing}
$c_{i,j}({\mathcal F})\in \mathcal{M}(n,\ell,k,t)$ $(i<j)$ for ${\mathcal F} \in \mathcal{M}(n,\ell,k,t)$.
\end{lemma}

\begin{proof} Suppose, by contradiction, that there are $F_1,F_2 \in {\mathcal F}$ with $|c_{i,j}(F_{1}) \cap c_{i,j}(F_{2})| < t$.
If both or neither of $c_{i,j}(F_{1})$ and $c_{i,j}(F_{2})$ are members of $\mathcal{F}$, their intersection obviously has size at least $t$. Hence without loss of generality we can assume $c_{i,j}(F_{1})=F'_{1} \not \in {\mathcal F}$ and $c_{i,j}(F_{2})=F_{2} \in {\mathcal F}$.

Let $x$ be the cardinality of the intersection of $F_1$ and $F_2$ in the complement
of the union of the $i$'th and $j$'th column.

\vspace{2mm}
\textit{Case 1:} $m(F_{2},i) \geq m(F_{2},j)$.  We know that $m(F_{1},i)<m(F_{1},j)$,

$$x+\min\{m(F_{2},i), m(F_{1},i)\}+\min\{m(F_{2},j), m(F_{1},j)\}
\geq t \hskip0.3cm \textrm{and}$$
$$x+\min\{m(F_{2},i), m(F_{1},j)\}+\min\{m(F_{2},j), m(F_{1},i)\} < t.$$

\vspace{2mm}

If $m(F_1,j)$ is the largest of the four numbers, then
$\min\{m(F_{2},i), m(F_{1},i)\}+m(F_{2},j) >m(F_{2},i)+\min\{m(F_{2},j), m(F_{1},i)\}$, but here the left hand side is at least $m(F_{2},j)+m(F_{2},i)$, the right hand side can be smaller only if $m(F_{1},i)<m(F_{2},j)$, but in that case one can easily see that the left hand side is even smaller. If not $m(F_1,j)$, then $m(F_2,i)$ is (one of) the largest of the four numbers, and the proof goes similarly.

\textit{Case 2}: $m(F_{2},i) < m(F_{2},j)$ but $F^{'}_{2}\in {\mathcal F}$. We know that
$|F^{'}_{1} \cap F_{2}|=|F^{'}_{2} \cap F_{1}|\ge t,$ a
contradiction.
\end{proof}

\begin{remark}
It's worth mentioning that there is an even more straightforward generalization of shifting, when we just decrease the multiplicity in column $j$ by one and increase it in column $i$ by one. Let $F'=F \cup \{(i, m(i,F))+1\} \setminus \{(j,m(j,F))\}$, and

\vspace{2mm}

$c_{i,j}'(F) = \left\{ \begin{array}{lll}
F' & \textrm{if}$ $m(j,F)>m(i,F) $ and $
F' \not \in {\mathcal F}\\
F & $ otherwise. $
\end{array} \right.$

\vspace{2mm}

But this operation does not preserve the $t$-intersecting property. However, if we apply our shifting operation to a maximum $t$-intersecting family and for every pair $(i,j)$, the resulting family will be also shifted according to this second kind of shifting, meaning that applying this second operation does not change the family.
\end{remark}

After applying $c_{i,j}$ for every pair $i,j$, the resulting shifted family has several
different $t$-kernels. For example the union of two rectangles $t^{1/2} \times (2k-t) \cup t \times \frac{2k}{t^{1/2}}$ is a $t$-kernel, or the set where the members $(x,y)$ satisfy $yx\leq k, x\leq 2k-t, y\leq k, x,y \geq 0$.

But using these $t$-kernels and some algebra, we failed to break through the $O(kt \log k) \leq n$ barrier. 
That's the reason we had to develop the \textit{down compressing} techniques described in the next section.

\section{Proof of \tref{compression}}

Both methods described below have underlying geometric ideas, it is a kind of discrete, tilted version of symmetrizing a set with respect to the hyperplane $x_i=x_j$.

\subsection{First proof --- a constructive one}

\subsubsection{A lemma about interval systems}

We will consider multisets which contain almost exactly the same elements, they differ only in two columns. More precisely, we are interested in multisets whose symmetric difference is a subset of $\{(i,j): 1\le j \le k\}\cup\{(i',j): 1\le j \le k\}$ with $i\neq j$. If we are given two such multisets, we can consider the two columns together, one going from $k$ to $1$ and the other going from $1$ to $k$. This way the restriction of two multisets to these columns form a subinterval of an interval of length $2k$ (where interval means set of consecutive integers). Hence we examine families of intervals.

Let $X:=\{1,\dots, 2k\}$. Let $Y \subset X$ be an interval and $p$ be an integer with $p \le |Y|$, then we define a family of intervals $I(p,Y)$ to be all the $p$-element subintervals of $Y$. Let ${\mathcal I}$ denote the class of these families.

We consider a shifted version, where the intervals are pushed to the middle. Let $\varphi(Y):=\{k-\lfloor|Y|/2\rfloor,\dots,k+\lceil|Y|/2\rceil-1\}$ and $\varphi(I(p,Y))=I(p,\varphi(Y))$. We will show that this operation does not decrease the size of the intersection of two families in ${\mathcal I}$.
Let $d(I(p,Y), I(q, Y')) := \min \{|I\cap J| : I \in I(p,Y), J \in I(q, Y')\}$.

\vspace{5mm}

\begin{lemma}
\label{lem:intervalsystem}

$d(I(p,\varphi(Y)),I(q,\varphi(Y'))) \geq d(I(p,Y), I(q, Y'))$ for all possible $p$, $q$, $Y$ and $Y'$.
\end{lemma}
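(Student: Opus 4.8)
The plan is to reduce the statement to an explicit closed form for the quantity $d$ and then to a short inequality about floors and ceilings. Write $a=|Y|$ and $b=|Y'|$, and let $s$ denote the offset of the two intervals, $s=\min Y'-\min Y$. Fix a $p$-subinterval $I\subseteq Y$ and a $q$-subinterval $J\subseteq Y'$, and regard the overlap $|I\cap J|$ as a function of the relative shift between $I$ and $J$. As $I$ slides inside $Y$ and $J$ slides inside $Y'$, this overlap is a \emph{unimodal} (trapezoidal) function of the shift: it is $0$ for large separation, rises linearly to a plateau, and falls back to $0$. Hence its minimum over the feasible range is attained at one of the two extreme configurations --- $I$ pushed to the left end of $Y$ with $J$ pushed to the right end of $Y'$, or the reverse. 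Writing $|I\cap J|=p+q-|I\cup J|$ and bounding the span $|I\cup J|$ by $b+s$ (respectively $a-s$) in the two extremes, I would obtain
\[
 d(I(p,Y),I(q,Y'))=\min\bigl(\max(0,\,p+q-b-s),\ \max(0,\,p+q-a+s)\bigr).
\]

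Next I would specialize this formula to the centered intervals. Since $\varphi(Y)$ and $\varphi(Y')$ are concentric, their offset is $s_\varphi=\lfloor a/2\rfloor-\lfloor b/2\rfloor$. Substituting and using $b-\lfloor b/2\rfloor=\lceil b/2\rceil$ (and likewise for $a$), the two arguments become $p+q-\lfloor a/2\rfloor-\lceil b/2\rceil$ and $p+q-\lceil a/2\rceil-\lfloor b/2\rfloor$; as the two subtracted quantities sum to $a+b$, the larger of them equals $\lceil (a+b)/2\rceil$, so the $\min$ selects it and
\[
 d(I(p,\varphi(Y)),I(q,\varphi(Y')))=\max\!\Bigl(0,\ p+q-\Bigl\lceil\tfrac{a+b}{2}\Bigr\rceil\Bigr).
\]

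It then remains to show $d(I(p,Y),I(q,Y'))\le \max(0,\,p+q-\lceil (a+b)/2\rceil)$ for the original offset $s$. Here I would use a simple averaging bound: whenever both $\max(0,\cdot)$ terms are positive, their arguments average to $p+q-(a+b)/2$, so their minimum --- and hence $d$ --- is at most $p+q-(a+b)/2$; and if either term vanishes then $d=0$. Thus $d\le\max(0,\,p+q-(a+b)/2)$ for \emph{every} $s$. Finally, since $d$ is an integer, this real bound upgrades to $d\le\max(0,\,p+q-\lceil (a+b)/2\rceil)$, because $\lfloor p+q-(a+b)/2\rfloor=p+q-\lceil (a+b)/2\rceil$. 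Comparing with the previous display yields the lemma.

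The bulk of the work is the careful derivation of the closed form for $d$: one must check that the extremal configurations are always feasible (they are, as $p\le a$ and $q\le b$) and correctly handle the cases where the forced span $b+s$ or $a-s$ already makes the intervals disjoint, so that the outer $\max(0,\cdot)$ is genuinely needed. The only genuinely delicate point is the passage from the real bound to the ceiling version via integrality of $d$; everything else is bookkeeping with the floor/ceiling identities.
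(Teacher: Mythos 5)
Your reduction to the two extreme configurations is sound (the overlap $|I\cap J|$ is a trapezoidal function of the relative shift, so its minimum over the feasible range of shifts is attained at an endpoint, and the two endpoints are exactly ``leftmost $I$ with rightmost $J$'' and the reverse); this is the same skeleton as the paper's argument. The gap is in your closed form: $b+s$ (resp.\ $a-s$) is a \emph{lower} bound for the span of $I\cup J$ in the extreme configuration, not an upper bound. When one interval sticks out past the other there (e.g.\ when $q>s+b$), the true overlap is capped, and the correct value at that configuration is $\max\bigl(0,\min(p,q,\,p+q-b-s,\,b+s)\bigr)$; your expression is therefore only an upper bound for $d$. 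That is harmless for the step $d(I(p,Y),I(q,Y'))\le\max(0,\,p+q-\lceil(a+b)/2\rceil)$, but it is fatal for the claimed \emph{equality} $d(I(p,\varphi(Y)),I(q,\varphi(Y')))=\max(0,\,p+q-\lceil(a+b)/2\rceil)$, on which your final comparison rests. Concretely, take $|Y|=p=1$ and $|Y'|=q=3$: then $\varphi(Y)=\{k\}\subseteq\varphi(Y')=\{k-1,k,k+1\}$, each family consists of a single interval, so $d(\varphi)=1$, while your formula gives $\max(0,1+3-2)=2$. Your argument then only yields $d(\mathrm{orig})\le 2=d(\varphi)$, which does not prove $d(\mathrm{orig})\le 1$.

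The repair is not hard, but the caps must be carried through. Write $u=b+s$ and $v=a-s$, so that $u+v=a+b$ is translation invariant; using $\min(\max(0,x),\max(0,y))=\max(0,\min(x,y))$, the correct closed form is
$d=\max\bigl(0,\ \min(p,\,q,\ p+q-\max(u,v),\ \min(u,v))\bigr)$.
Centering replaces $(u,v)$ by the balanced pair $(\lfloor a/2\rfloor+\lceil b/2\rceil,\ \lceil a/2\rceil+\lfloor b/2\rfloor)$, which weakly decreases $\max(u,v)$ to $\lceil(a+b)/2\rceil$ and weakly increases $\min(u,v)$ to $\lfloor(a+b)/2\rfloor$; since $p$ and $q$ are unchanged, every term inside the $\min$ weakly increases and the lemma follows. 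For what it is worth, the paper's own proof is also terse at exactly this point (it asserts that only $\max(y,y')$ matters), but your write-up commits to a specific identity that is false in the containment cases, so this must be fixed before the proof stands.
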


\begin{proof} Obviously the smallest intersection is the intersection of the first interval in one of the families and the last interval in the other family. As the length of the intervals are always $p$ and $q$, the only thing that matters is the difference between the starting and ending points of $\varphi(Y)$ and $\varphi(Y')$. More precisely we want to minimize the largest of $y=\max \varphi(Y)-\min\varphi(Y')$ and $y'=\max \varphi(Y')-\min\varphi(Y)$. As $\max \varphi(Y)-\min\varphi(Y)=|Y|-1$ and $\max \varphi(Y')-\min\varphi(Y')=|Y'|-1$, we know that $y+y'$ is constant, hence we get the minimum if $y$ and $y'$ is as close as possible.
One can easily see that our shifted system gives this.
\end{proof}

\subsubsection{Interval systems and families of multisets}

Now to apply the method of the previous subsection, we fix $n-2$ coordinates, i.e., we are given $i<j\le n$ and $g: ([n] \setminus \{i,j\})\rightarrow [1,k]$. Let $\mathcal{F}_g=\{F\in\mathcal{F}: m(r,F)=g(r)\, \textrm{for every $r\neq i,j$}\}$. It implies $m(i,F)+m(j,F)$ is the same number $s=s(g)$ for every member $F\in\mathcal{F}_g$.

Let us consider now the case ${\mathcal F}$ is maximal, i.e., no $k$-multiset can be added to it without violating the $t$-intersecting property. We show that it implies that the integers $m(i,F)$ are consecutive for $F\in \mathcal{F}_g$.
Let $m_i=\min \{m(i,F): F \in \mathcal{F}_g\}$ and $M_i=\max \{m(i,F): F \in \mathcal{F}_g\}$. 
We define $m_j$ and $M_j$ similarly. Let us consider a set $F\not\in \mathcal{F}$ which satisfies $m(r,F)=g(r)$ for all $r\neq i,j$ and also $m_i \le m(i,F)\le M_i$, and consequently $m_j \le m(j,F) \le M_j$. It is easy to see that $F$ can be added to $\mathcal{F}$ without violating the $t$-intersecting property (and then it belongs to $\mathcal{F}_g$).

Now we give a bijection between these type of families and interval systems. We lay down both columns, such that column $i$ starts at its top, and column $j$ start at its bottom. Then move them next to each other to form an interval. More precisely let $\Psi((i,u))=k-u+1$ and $\Psi((j,u))= k+u$. For a multiset $F$ let $\Psi(F)=\{\Psi((i,u)): (i,u) \in F\} \cup \{\Psi((j,u)): (j,u) \in F\}$ and for a family of multisets ${\mathcal F}$ let $\Psi({\mathcal F})=\{\Psi(F): F \in {\mathcal F}\}$.

We show that $\Psi({\mathcal F}_g) \in {\mathcal I}$. It is obvious that $\Psi(F)$ is an interval for any multiset $F$, and that the length of those intervals is the same number (more precisely $s$) for every member $F\in\mathcal{F}_g$. We need to show that the intervals $\Psi(F)$ (where $F \in {\mathcal F}_g$) are all the subintervals of an interval $Y$. It is enough to show that the starting points of these intervals are consecutive integers. The starting points of the intervals $\Psi(F)$ are $\Psi((i,m(i,F)))$, and it is easy to see that they are consecutive if and only if $m(i,F)$ are consecutive.

Since $\Psi$ is a bijection, an interval system also defines a family in the two columns $i$ and $j$. Let us examine what family we get after applying operation $\varphi$ from the previous section, i.e., what ${\mathcal F}'= \Psi^{-1}(\varphi(\Psi({\mathcal F}_g)))$ is. Obviously it is a family of $s$-multisets with the same cardinality as ${\mathcal F}_g$. Simple calculations show that they are the $s$-multisets with $m(i,F) \le \lfloor /m_i+M_j)/2\rfloor+1$ and $m(j,F) \le \lceil (m_i+M_j)/2\rceil -1$.

\subsubsection{The construction of $f$}

Let $\psi(\mathcal{F}_g)=\Psi^{-1}(\varphi(\Psi({\mathcal F}_g)))$, i.e., the family we get from $\mathcal{F}_g$ by keeping everything in the other $n-2$ columns, but making it balanced in the columns $i$ and $j$ in the following sense. It contains all the $k$-multisets where $m(i,F) \le \lfloor (m_i+M_j)/2\rfloor+1$, $m(j,F) \le \lceil (m_i+M_j)/2\rceil -1$ and the other coordinates are given by $g$.

Now let us recall that $i$ and $j$ are fixed. Let $G_{i,j}$ be the set of every $g: ([n] \setminus \{i,j\})\rightarrow [1,k]$, i.e., every possible way to fix the other $n-2$ coordinates. Clearly $\mathcal{F}=\cup \{\mathcal{F}_g: g \in G_{i,j} \}$ and they are all disjoint. Let $\psi_{i,j}(\mathcal{F})$ denote the result of applying the appropriate $\psi$ operation for every $g$ at the same time, i.e., $\psi_{i,j}(\mathcal{F})=\cup \{\psi(\mathcal{F}_g): g \in G_{i,j}\}$.

\begin{lemma} If $\mathcal{F}$ is $t$-intersecting, then $\psi_{i,j}(\mathcal{F})$ is $t$-intersecting.
\end{lemma}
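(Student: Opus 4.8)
The operation $\psi_{i,j}$ acts separately on each "slice" $\mathcal{F}_g$ (fixing all coordinates except $i,j$), and within each slice it symmetrizes/balances the two columns via the interval transformation $\varphi$. I need to show the combined operation preserves $t$-intersection.

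**Key insight.** The crucial reduction: two multisets in different slices $g \neq g'$ already differ in some coordinate $r \neq i,j$, so their intersection outside columns $i,j$ is fixed by $g,g'$ and unchanged by $\psi_{i,j}$. Only within-column-$\{i,j\}$ intersections change. And Lemma intervalsystem says $\varphi$ doesn't decrease the minimum intersection.

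**The obstacle.** The tricky part: when $F_1, F_2$ lie in different slices, their columns $i,j$ are governed by different interval families $\varphi(Y_g)$ and $\varphi(Y_{g'})$ — I must combine the fixed "outside" contribution with Lemma intervalsystem's bound on the column-$\{i,j\}$ part.

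Here is the proof plan:

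\begin{proof}
The plan is to verify that for any two members $F_1, F_2 \in \psi_{i,j}(\mathcal{F})$ we have $|F_1 \cap F_2| \ge t$, reducing everything to \lref{intervalsystem}.

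First I would fix the relevant data. Say $F_1 \in \psi(\mathcal{F}_g)$ and $F_2 \in \psi(\mathcal{F}_{g'})$ for some $g, g' \in G_{i,j}$. Write $x := \sum_{r \neq i,j} \min\{g(r), g'(r)\}$ for the contribution to $|F_1 \cap F_2|$ coming from the $n-2$ fixed columns; since $\psi$ leaves those columns untouched, $x$ is exactly the same as it was before the operation, and equals the outside-contribution for \emph{every} pair drawn from $\mathcal{F}_g$ and $\mathcal{F}_{g'}$. Thus $|F_1 \cap F_2| = x + (\text{contribution from columns } i,j)$, and the only part affected by $\psi_{i,j}$ is the column-$\{i,j\}$ part, which is precisely $|\Psi(F_1) \cap \Psi(F_2)|$ under the interval encoding $\Psi$.

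Next I would translate the $t$-intersecting hypothesis on $\mathcal{F}$ into the interval language. Before applying $\psi$, the slices $\mathcal{F}_g$ and $\mathcal{F}_{g'}$ encode, via $\Psi$, interval families $I(p, Y_g)$ and $I(q, Y_{g'})$ for appropriate lengths $p = s(g)$, $q = s(g')$ and ambient intervals $Y_g, Y_{g'}$. The fact that $\mathcal{F}$ is $t$-intersecting forces $x + |I \cap J| \ge t$ for all $I \in I(p,Y_g)$, $J \in I(q,Y_{g'})$, i.e.\ $x + d\bigl(I(p,Y_g), I(q,Y_{g'})\bigr) \ge t$. After applying $\psi$, the new families are $I(p, \varphi(Y_g))$ and $I(q, \varphi(Y_{g'}))$, so the worst-case column-$\{i,j\}$ intersection becomes $d\bigl(I(p,\varphi(Y_g)), I(q,\varphi(Y_{g'}))\bigr)$. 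By \lref{intervalsystem} this quantity is at least $d\bigl(I(p,Y_g), I(q,Y_{g'})\bigr)$, whence $|F_1 \cap F_2| \ge x + d\bigl(I(p,\varphi(Y_g)), I(q,\varphi(Y_{g'}))\bigr) \ge x + d\bigl(I(p,Y_g), I(q,Y_{g'})\bigr) \ge t$, as desired.

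The main obstacle I anticipate is the bookkeeping for the cross-slice case $g \neq g'$: one must be careful that \lref{intervalsystem} is stated for a single fixed pair of starting-interval configurations, whereas here $F_1$ and $F_2$ come from genuinely different slices with possibly different lengths and different ambient intervals $Y_g \neq Y_{g'}$. The lemma as proved handles exactly arbitrary $p,q,Y,Y'$, so it does apply, but I would make explicit that the same additive constant $x$ attaches to \emph{all} pairs in $\mathcal{F}_g \times \mathcal{F}_{g'}$ (since it depends only on $g,g'$ off columns $i,j$), so that the minimum over the interval part and the $t$-intersecting hypothesis line up correctly. The same-slice case $g = g'$ is easier: there $\psi(\mathcal{F}_g)$ consists of \emph{all} $k$-multisets with the prescribed multiplicity bounds in columns $i,j$, which by the maximality observation preceding the construction is itself $t$-intersecting within the slice.
\end{proof}
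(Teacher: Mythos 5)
Your proof is correct and follows essentially the same route as the paper's: both reduce the claim to \lref{intervalsystem} via the encoding $\Psi$, using that the contribution to the intersection from the $n-2$ fixed columns is determined by $g,g'$ and untouched by $\psi_{i,j}$, while the column-$\{i,j\}$ contribution equals the interval intersection. Your version merely makes the additive constant $x$ explicit and argues directly rather than by contradiction; the closing remark about the case $g=g'$ being handled ``by maximality'' is unnecessary (and not quite justified as stated), since your general argument already covers $Y=Y'$.
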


\begin{proof} Suppose there are $F_1,F_2\in \psi_{i,j}(\mathcal{F})$ with $|F_1\cap F_2| <t$. Let $F_1\in \psi_{i,j}(\mathcal{F}_{g_1})$ and  $F_2\in \psi_{i,j}(\mathcal{F}_{g_2})$. Let $\Psi(\mathcal{F}_{g_1})=I(p_1,Y_1)$ and
$\Psi(\mathcal{F}_{g_2})=I(p_2,Y_2)$. Then $\Psi(\psi_{i,j}(\mathcal{F}_{g_1}))=\varphi(I(p_1,Y_1))$ and $\Psi(\psi_{i,j}(\mathcal{F}_{g_2}))=\varphi(I(p_2,Y_2))$. It is important to see that $\Psi$ is defined on the elements of $M(n,\ell)$ such a way that the size of the intersection is the same after applying $\Psi$.

By Lemma~\ref{lem:intervalsystem} $d(I(p_1,\varphi(Y_1)),I(p_2,\varphi(Y_2))) \geq d(I(p_1,Y_1), I(p_2, Y_2))$, which means there is a member of $\mathcal{F}_{g_1}$ and a member of $\mathcal{F}_{g_2}$ such that their intersection has size at most the size of the smallest intersection between members of $\psi_{i,j}(\mathcal{F}_{g_1})$, which is less than $t$, a contradiction.

Note that it does not matter if $g_1$ is equal to $g_2$.
\end{proof}

\begin{lemma}\label{csokken}

If $\psi_{i,j}({\mathcal F})\neq {\mathcal F}$ then

\begin{equation}
\label{eq:3}
\sum_{F'\in \psi_{i,j}({\mathcal F})}\Big[|{\mathcal F}|nk^2\sum_{i \in [n]}(m(i,F'))^2+\sum_{i \in [n]}i(m(i,F'))\Big]< \sum_{F\in {\mathcal F}}\Big[|{\mathcal F}|nk^2\sum_{i \in [n]}(m(i,F))^2+\sum_{i \in [n]}i(m(i,F))\Big]
\end{equation}
\end{lemma}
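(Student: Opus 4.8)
The plan is to understand what the operation $\psi_{i,j}$ does to the potential function $\Phi({\mathcal F}) := \sum_{F \in {\mathcal F}}\bigl[|{\mathcal F}|nk^2\sum_{i\in[n]}(m(i,F))^2 + \sum_{i\in[n]}i\,m(i,F)\bigr]$ and show that it strictly decreases whenever $\psi_{i,j}$ actually changes ${\mathcal F}$. The key observation is that $\psi_{i,j}$ decomposes ${\mathcal F}$ into the disjoint pieces ${\mathcal F}_g$, each of which lives entirely in columns $i$ and $j$ (with the other columns frozen by $g$), and acts on each piece independently. So I would first argue that $\Phi$ is a sum of contributions that split cleanly according to the two summands, and that the contribution from the frozen columns is unchanged. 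Hence it suffices to track how each of the two summands changes on a single slice ${\mathcal F}_g$, in columns $i$ and $j$ only.

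First I would analyze the dominant term $|{\mathcal F}|nk^2\sum_i (m(i,F))^2$. The operation $\psi$ replaces the set of pairs $(m(i,F),m(j,F))$ occurring in ${\mathcal F}_g$ (which all have fixed sum $s = m(i,F)+m(j,F)$) by the \emph{balanced} collection, centering the multiplicities around $s/2$ as dictated by $\varphi$. For a fixed sum $s$, the quantity $m(i,F)^2 + m(j,F)^2 = (m(i,F)+m(j,F))^2 - 2\,m(i,F)m(j,F)$ is minimized precisely when the two values are as equal as possible, so balancing can only decrease $\sum_i (m(i,F))^2$ on each slice, and hence on all of ${\mathcal F}$. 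The crucial point is that if $\psi_{i,j}({\mathcal F}) \neq {\mathcal F}$, then \emph{some} slice ${\mathcal F}_g$ is genuinely rebalanced, and I would check that for that slice the sum-of-squares term strictly drops by at least $1$ (the smallest possible integer gap), giving a decrease of at least $|{\mathcal F}|nk^2$ in the first summand.

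Then I would bound the change in the secondary term $\sum_{F}\sum_i i\,m(i,F)$. This term is there only to break ties and enforce strictness; its total variation is easy to bound crudely. Since every multiplicity lies in $[0,k]$, each column index is at most $n$, and there are $|{\mathcal F}|$ members each with $n$ columns, the absolute change in this secondary term across the whole family is at most $|{\mathcal F}|\cdot n \cdot n \cdot k = |{\mathcal F}|n^2k$, which under the standing assumption $k \ge 1$ (and $n \ge 1$) is strictly smaller than the $|{\mathcal F}|nk^2$ drop guaranteed above once one is careful with the constants — indeed the factor $nk^2$ was chosen precisely so the first term dominates. Combining, the left-hand side of \eqref{eq:3} is strictly less than the right-hand side.

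The main obstacle I expect is making the domination argument fully rigorous with honest constants: I must verify that the guaranteed strict decrease in the quadratic term (a single unit, amplified by the weight $|{\mathcal F}|nk^2$) genuinely swamps the \emph{worst-case aggregate} fluctuation of the linear term over all members and all columns, rather than just over the two affected columns. The safe route is to bound the linear term's change slice-by-slice and column-by-column and confirm the weight $nk^2$ was calibrated to beat $n\cdot\max_i i\cdot\max m(i,F) \le n\cdot n\cdot k$; if the naive bound is too weak, I would sharpen it by noting that $\psi_{i,j}$ only moves mass between columns $i$ and $j$, so the linear term's change on each rebalanced slice is at most $|j-i|\cdot k \le nk$, and it can only help (the secondary term is designed so that pushing mass toward the balanced, lower-indexed configuration is favorable). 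Checking that these weights are correctly tuned, and that the strict inequality survives even in the extremal slices where the quadratic gain is minimal, is the delicate bookkeeping step.
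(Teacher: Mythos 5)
Your overall strategy (slice into the classes ${\mathcal F}_g$, use convexity for the quadratic term, use the weight $nk^2$ to dominate the linear term) is the natural way to make precise what the paper dismisses as ``trivial by the symmetrization,'' but the pillar you build on is false. You claim that whenever a slice ${\mathcal F}_g$ is genuinely rebalanced, its sum-of-squares contribution strictly drops by at least $1$. It need not drop at all: for a window of $a$-values of fixed length $L$ and fixed column sum $s$, the sliding-window sum $\sum_a\bigl(a^2+(s-a)^2\bigr)$ is a convex quadratic in the left endpoint whose integer minimum is attained at \emph{two} symmetric positions when $s+1-L$ is odd, and $\varphi$ can move the window from one of these to the other, changing the family while leaving the quadratic term exactly unchanged. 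Concretely, a slice consisting of the single multiset with $(m(i,F),m(j,F))=(0,1)$ is mapped to $(1,0)$: the family changes, $0^2+1^2=1^2+0^2$, and no multiple of $|{\mathcal F}|nk^2$ is gained. In that situation the strict inequality in \eqref{eq:3} must come entirely from the linear term $\sum_i i\,m(i,F)$, which strictly decreases because $\varphi$ breaks the tie toward the lower-indexed column $i<j$. Your proposal mentions this direction only parenthetically (``it can only help'') while the argument as structured hinges on a strict quadratic gain that does not exist in the tie case. The correct argument is a dichotomy: the quadratic term weakly decreases on every slice; if it decreases somewhere, integrality gives a drop of at least $|{\mathcal F}|nk^2$, which exceeds the total possible variation of the linear term; if it decreases nowhere but the family changed, every changed slice is a reflection onto the $\varphi$-preferred side and the linear term strictly decreases.

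Two smaller points. First, your initial bound $|{\mathcal F}|n^2k$ on the variation of the linear term does \emph{not} beat $|{\mathcal F}|nk^2$, since $n\ge 2k-t\ge k$; you notice this and propose a repair, but the clean bound is simply that $\sum_i i\,m(i,F)\in[k,nk]$ for each $F$, so the total variation is at most $|{\mathcal F}|(n-1)k<|{\mathcal F}|nk^2$. Second, for the dominance step you need the quadratic drop to be at least $1$ \emph{in total}, which follows from integrality together with the fact that no slice's quadratic contribution can increase --- a point worth stating explicitly rather than arguing slice-by-slice only about the slice that changed.
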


\begin{proof}
Trivial by the symmetrization.
\end{proof}

Now we are ready to define $f(\mathcal{F})$. If there is a pair $(i,j)$ such that $\psi_{i,j}(\mathcal{F})\neq \mathcal{F}$, let us replace $\mathcal{F}$ by $\psi_{i,j}(\mathcal{F})$, and repeat this step. Lemma~\ref{csokken} implies that it can be done only finitely many times, after that we arrive to a family ${\mathcal F}'$ such that $\psi_{i,j}(\mathcal{F'})=\mathcal{F'}$ for every pair $(i,j)$. This family is denoted by $f(\mathcal{F})$.

We would like to prove that $f$ satisfies \tref{compression} $(ii)$.
This step is the only point we use that $n\geq 2k-t$. 

\begin{lemma}

$|F_1\cap F_2 \cap M(n,1)| \geq t$ for all $F_1, F_2 \in f(\mathcal{F})$.
\end{lemma}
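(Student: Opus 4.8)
The plan is to exploit the fixed-point property of $f(\mathcal{F})$ under every $\psi_{i,j}$ together with the balancing description of $\psi$. Suppose for contradiction that there exist $F_1, F_2 \in f(\mathcal{F})$ with $|F_1 \cap F_2 \cap M(n,1)| \le t-1$. The quantity $|F_1 \cap F_2 \cap M(n,1)|$ counts the number of columns $r$ in which both $F_1$ and $F_2$ have positive multiplicity, i.e.\ $|\{r : m(r,F_1) \ge 1 \text{ and } m(r,F_2) \ge 1\}| \le t-1$. Since $|F_1 \cap F_2| \ge t$, the deficit must be made up by columns where both multiplicities are large, so the \emph{support} of $F_1 \cap F_2$ is small while the total intersection is at least $t$.

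First I would record what the fixed-point condition $\psi_{i,j}(f(\mathcal{F}))=f(\mathcal{F})$ forces on a single family of the form $\mathcal{F}_g$. By the construction of $\psi$, being a fixed point means that in every pair of columns $(i,j)$ the family is already ``balanced to the middle'': for the relevant sum $s$, the multisets present are exactly those with $m(i,F)$ and $m(j,F)$ lying in a centered range. In particular, if some member of $f(\mathcal{F})$ has a large multiplicity $m(i,F) \ge 2$ in a column $i$ while having multiplicity $0$ in some other column $j$, then balancing in the pair $(i,j)$ would move mass from $i$ into $j$, decreasing the maximum height and changing the family --- contradicting the fixed-point property unless such configurations are already impossible. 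The key structural consequence I expect is that fixed points are ``spread out'' as much as the $t$-intersecting constraint permits: multiplicities cannot be concentrated in few columns when empty columns are available.

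Next I would turn this into a contradiction using $n \ge 2k-t$. The idea is that if the common support of $F_1$ and $F_2$ had size at most $t-1$, then the $k-1$ units of each multiset outside this common support, together with the available empty columns (there are at least $n - (\text{used columns})$ of them, and $n \ge 2k-t$ guarantees enough room), could be rebalanced by some $\psi_{i,j}$ to strictly decrease the objective in Lemma~\ref{csokken}, contradicting that $f(\mathcal{F})$ is a fixed point. Concretely, I would locate a column $i$ where $F_1 \cap F_2$ has multiplicity $\ge 2$ (which must exist by the counting above, since a support of size $\le t-1$ carrying an intersection of size $\ge t$ forces a repeated column) and an empty column $j$ (available because $n \ge 2k-t$ bounds how many columns the two $k$-multisets can jointly occupy), and argue that symmetrizing across $(i,j)$ would alter the family, contradicting $\psi_{i,j}(f(\mathcal{F})) = f(\mathcal{F})$.

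The main obstacle will be the third step: carefully verifying that an empty column is genuinely available and that the pairwise balancing actually \emph{changes} the family (so that the fixed-point hypothesis is violated), rather than leaving it invariant. This requires a precise grip on how $\psi_{i,j}$ acts not on a single pair $F_1, F_2$ but on the full slices $\mathcal{F}_g$ simultaneously, and on how the centered balancing interacts with the global $t$-intersecting constraint via $d(I(p,Y),I(q,Y'))$ from Lemma~\ref{lem:intervalsystem}. The inequality $n \ge 2k-t$ is exactly the counting bound that makes the empty column available --- two $k$-multisets with common support of size $\le t-1$ occupy at most $2k - (t-1) - (\text{overlap beyond support})$ columns, and one checks this is strictly less than $n$, so the rebalancing step can be carried out.
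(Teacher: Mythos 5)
There is a genuine gap in your final step. You plan to derive the contradiction from the claim that, once you have found a column $i$ with $\min\{m(i,F_1),m(i,F_2)\}\ge 2$ and an empty column $j$, ``symmetrizing across $(i,j)$ would alter the family,'' violating $\psi_{i,j}(f(\mathcal{F}))=f(\mathcal{F})$. That implication is false: a slice $\mathcal{F}_g$ that is already balanced (hence fixed by $\psi_{i,j}$) can perfectly well contain a member with $m(i,F)\ge 2$ and $m(j,F)=0$. For instance, a slice whose restriction to columns $i,j$ is all of $(2,0),(1,1),(0,2)$ is centered and invariant under $\psi_{i,j}$, yet contains the configuration you want to rule out. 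So the ``key structural consequence'' you expect --- that fixed points cannot have multiplicity concentrated in a column while another column is empty --- does not hold, and your contradiction never materializes. The fixed-point property has to be used in the opposite direction: not to show the family \emph{would change}, but to show that, since the slice through $F_1$ is already balanced and the $m(i,F)$-values in a slice form a consecutive range, the family must \emph{also contain} the multiset $F_1'$ obtained from $F_1$ by moving one unit from column $i$ to height $1$ of the empty column $j$.

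For that conclusion to yield a contradiction you also need an ingredient your write-up omits entirely: an extremal choice of the pair. The paper picks $F_1,F_2$ minimizing $|F_1\cap F_2\cap M(n,1)|$ first and, among those, minimizing $|F_1\cap F_2|$. Then $F_1'\in f(\mathcal{F})$ satisfies $|F_1'\cap F_2|<|F_1\cap F_2|$ (the minimum at column $i$ drops, and column $j$ contributes nothing since $m(j,F_2)=0$) while $|F_1'\cap F_2\cap M(n,1)|=|F_1\cap F_2\cap M(n,1)|$ (column $i$ still has $m(i,F_1')\ge 1$), contradicting the secondary minimality. Without the double minimization there is nothing for $F_1'$ to contradict. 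Your identification of the repeated column $i$ and the role of $n\ge 2k-t$ in producing an empty column $j$ does match the paper (though the counting should be done as in the paper, via the excess multiplicities forcing $|{\rm supp}(F_1)\cup{\rm supp}(F_2)|\le 2k-t-1<n$, not via the common support alone), but as stated the argument does not close.
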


\begin{proof}
We argue by contradiction. Let us choose $F_1$ and $F_2$ such a way that $|F_1\cap F_2 \cap M(n,1)|$ is the smallest (definitely less than $t$), and among those $|F_1 \cap F_2|$ is the smallest (definitely at least $t$).  Then there is a coordinate where both $F_1$ and $F_2$ have at least $2$, and this implies there is an other coordinate, where both have $0$, as $2k-t \leq n$. More precisely, there is an $i \leq n $ with $2 \leq \min\{m(i,F_1),m(i,F_2)\}$ and a $j \leq n$ with $m(j,F_1)=m(j,F_2)=0$. Let $F'_1$ be defined the following way: $m(j,F'_1)=1, m(i,F'_1)=m(i,F_1)-1$ and $m(s,F'_1)=m(s,F_1)$ for $s \leq n$, $s\neq i,j$. One can easily see that $F_1' \in \psi_{i,j}(f(\mathcal{F}))=f(\mathcal{F})$. However, $|F_1' \cap F_2| < |F_1 \cap F_2|$ and $|F_1'\cap F_2 \cap M(n,1)| = |F_1\cap F_2 \cap M(n,1)|$, a contradiction.
\end{proof}

To finish the proof of \tref{compression} we have to deal with the case ${\mathcal F}$ is not maximal (even though it is not needed in order to prove \tref{EKRmulti}). For sake of brevity here we just give a sketch.

Note that $\Psi$ can be similarly defined in this case. The main difference is that the resulting family of intervals is not in ${\mathcal I}$, as it does not contain all the subintervals of an interval. Also note that $\varphi(I(p,Y))$ is determined by the number and length of the intervals in $I(p,Y)$. Using this we can extend the definition of $\varphi$ to any family of intervals. This way we can define $\psi_{i,j}$ as well. What happens is that besides being more balanced in the columns $i$ and $j$, the multisets in ${\mathcal F}_g$ are also pushed closer to each other. Hence one can easily see that the intersections cannot be smaller in this case, which finishes the proof.


\subsection{Second proof --- a less constructive one}

\begin{proof}
[Proof of \tref{compression}:]
\

\vspace{3mm}

For $F \in {\mathcal F} \in \mathcal{M}(n,k,t)$, $i \leq n$, $s \leq m(i,F)$ and $j \leq n$ let

$$F'=F\setminus (\cup_{s \leq t \leq m(i,F)} (i,t)) \bigcup (\cup_{1 \leq l \leq m(i,F)-s+1}(j,l)).$$

Using this notation we define another shifting operation.

\vspace{3mm}

\begin{defn}

$S(i,s)(j,1)(F) := \left\{ \begin{array}{ll}
F' & \textrm{if}\,  (j,1) \not \in F \,\textrm{and}\, F' \not \in {\mathcal F}\\
\\

F &  \textrm{otherwise.}
\end{array} \right.$

\vspace{2mm}

For ${\mathcal F} \in \mathcal{M}(n,k,t)$ let $S(i,s)(j,1)({\mathcal F})= \{S(i,s)(j,1)(F) : F \in {\mathcal F} \}$.
\end{defn}

\vspace{2mm}

For ${\mathcal F} \in \mathcal{M}(n,k,t)$ let ${\mathcal K}({\mathcal F})$ be the set of \textit{t-kernels} of ${\mathcal F}$ which contain $M(n,1)$ and are multisets. For $T \in {\mathcal K}({\mathcal F})$ let $T_{>1}:= T \setminus M(n,1)$.
We would like to define an operation on $\mathcal{M}(n,k,t)$ which decreases $\min\{|T_{>1}| : T \in {\mathcal K}({\mathcal F})\}$ for any ${\mathcal F} \in \mathcal{M}(n,\ell,k,t)$ in case it is positive.

Let us apply $S(i,m(i,T))(1,1)$ to $\mathcal{F}$, then $S(i,m(i,T))(2,1)$ to the resulting family, and so on. Let ${\mathcal F}'$ be the resulting family after applying $S(i,m(i,T))(n,1)$, i.e

$${\mathcal F}'=S(i,m(i,T))(n,1)[....[S(i,m(i,T))(2,1)[S(i,m(i,T))(1,1)({\mathcal F})]]...].$$

\vspace{4mm}

\begin{lemma}
\label{lem:comp}

Let ${\mathcal F} \in \mathcal{M}(n,k,t)$, $T \in {\mathcal K}({\mathcal F})$ satisfying
$|T_{>1}| > 0$ and let $1 \leq i \leq n$, $2\leq m(i,T)$.
Then:

\vspace{2mm}

$(i)$ ${\mathcal F}' \in \mathcal{M}(n,k,t)$ and $|{\mathcal F}|=|{\mathcal F}'|$;

\vspace{2mm}

$(ii)$ $(T\setminus (i,m(i,T))) \in {\mathcal K}({\mathcal F}')$.
\end{lemma}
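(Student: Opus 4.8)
The plan is to carry $T$ along as a $t$-kernel through the whole construction and to read off both statements from two extra facts: $s:=m(i,T)\ge 2$, and, since $M(n,1)\subseteq T$, that $(j,1)\in T$ (so $m(j,T)\ge 1$) for every column $j$. Observe at the outset that the block which $S(i,s)(j,1)$ strips from column $i$ sits at heights $\ge s=m(i,T)$, so it meets $T$ only in the single cell $(i,s)$, while its image at the bottom of the previously empty column $j$ always contains the cell $(j,1)\in T$. Throughout, write $F^{(j)}$ for the multiset obtained from $F$ by transferring the cells $(i,s),\dots,(i,m(i,F))$ to the bottom of column $j$.

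First I would prove the sharper claim that every single operation $S(i,s)(j,1)$ keeps $T$ a $t$-kernel; this gives the $t$-intersecting part of $(i)$ for free and leaves $T$ available as a kernel for part $(ii)$. Injectivity (hence $|\mathcal{F}|=|\mathcal{F}'|$) and the preservation of the downward-closed $k$-multiset shape are the standard shifting bookkeeping: a collision would force one member to be moved and another to be fixed, but a member is fixed only when its would-be image already lies in the family, contradicting that the moved member's image was absent. For the kernel I take $F_1,F_2$ with images $\widehat F_1,\widehat F_2$ and check $|\widehat F_1\cap\widehat F_2\cap T|\ge t$ by cases on which of the two is moved. If both move, the triple intersection loses the cell $(i,s)$ in column $i$ but gains at least the cell $(j,1)\in T$ in column $j$; if neither moves there is nothing to prove. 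In the mixed case, say $F_1$ moved to $F_1^{(j)}$ and $F_2$ fixed, I split on the reason $F_2$ is fixed: if $F_2^{(j)}\in\mathcal{F}$ already, I use the termwise identity $|F_1^{(j)}\cap F_2\cap T|=|F_1\cap F_2^{(j)}\cap T|$ and apply the kernel to the legitimate pair $F_1,F_2^{(j)}\in\mathcal{F}$; if instead $(j,1)\in F_2$ or $m(i,F_2)<s$, a direct computation shows the column-$j$ gain offsets the at-most-one cell lost in column $i$.

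For part $(ii)$ the target is $T^-:=T\setminus(i,s)$, and since $T^-$ and $T$ differ only in $(i,s)$ it suffices to show that whenever $F_1,F_2\in\mathcal{F}'$ both reach height $\ge s$ in column $i$ one has $|F_1\cap F_2\cap T|\ge t+1$. The engine is a saturation property of the fully shifted family: if $F\in\mathcal{F}'$ has $m(i,F)\ge s$ and column $j$ is empty in $F$, then $F^{(j)}\in\mathcal{F}'$. I would prove this by induction along the sequence $S(i,s)(1,1),\dots,S(i,s)(n,1)$: the property for target $j$ holds immediately after the $j$-th operation, and no later operation destroys it, because a member with $m(i,\cdot)\ge s$ must have been left fixed (moved members have their column $i$ lowered to $s-1$), while the witness $F^{(j)}$, having column $i$ of height $s-1<s$, is likewise fixed and therefore survives. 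Granting saturation, suppose for contradiction that such $F_1,F_2$ satisfy $|F_1\cap F_2\cap T|=t$. Moving $F_1$'s block into a column empty in $F_1$ produces a member of $\mathcal{F}'$ whose triple intersection with $F_2$ drops by exactly the lost cell $(i,s)$ unless that column is occupied in $F_2$; the kernel inequality then forces every column empty in $F_1$ to be occupied in $F_2$, and symmetrically, so $F_1$ and $F_2$ jointly occupy all $n$ columns. Letting $a,b,c$ be the numbers of columns occupied in $F_1$, in $F_2$, and in both, the common columns already account for at least $t$ cells in each of $F_1,F_2$, so $a,b\le k-t+c$ and thus $n=a+b-c\le 2k-2t+c$; with $n\ge 2k-t$ this gives $c\ge t$. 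On the other hand column $i$ contributes $s\ge 2$ to $F_1\cap F_2\cap T$ while each of the other $c-1$ common columns contributes at least $1$, so $t\ge s+(c-1)\ge c+1$, i.e. $c\le t-1$, a contradiction. Hence $T^-\in\mathcal{K}(\mathcal{F}')$.

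I expect the saturation property to be the genuine obstacle: because the family is modified between the $n$ operations, one must argue carefully that the compressions already achieved toward earlier target columns are not undone by later ones, and this is exactly what the ``a tall column forces a fixed member'' observation buys. The kernel-preservation case analysis in part $(i)$, especially the mixed-case swap identity, is routine but is where most of the computation lives, and the final counting is the one place where the hypothesis $n\ge 2k-t$ is used in an essential way.
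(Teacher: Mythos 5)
Your proof is correct, and part $(i)$ is essentially the paper's argument (single-step kernel preservation, the swap identity $|F_1^{(j)}\cap F_2\cap T|=|F_1\cap F_2^{(j)}\cap T|$ in the mixed case, standard injectivity). Part $(ii)$, however, is organized genuinely differently. The paper tracks a pair $F,G$ of the \emph{original} family through the chain: if either image moved, the cell $(i,s)$ has left the intersection and one argues as in part $(i)$; if both stayed fixed and $(i,s)\in F\cap G$, it finds a column $j$ empty in $F\cup G$ directly from $|F\cup G|\le 2k-t\le n$ together with column $i$ carrying at least two cells of each, and then reads off $F^{(j)}\in\mathcal{F}$ from the mere fact that $F$ survived the $j$-th operation unchanged. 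You instead first isolate an explicit saturation lemma for the fully shifted family (every tall-in-column-$i$ member of $\mathcal{F}'$ has all its images $F^{(j)}$ present), proved by the induction you describe --- the observation that tall members and already-shifted witnesses are both necessarily fixed by later operations is exactly right and is the point the paper glosses over with ``as we are in Case 2.'' You then run a contrapositive count with the parameters $a,b,c$ to contradict the assumption that $F_1\cup F_2$ covers all columns. Both routes use the same two engines (a fixed member's image is already in the family; $n\ge 2k-t$ forces an empty common column), so neither is more powerful, but your version buys a cleaner, reusable statement about $\mathcal{F}'$ at the cost of an extra induction, while your counting is more elaborate than necessary: the paper's one-line bound on the number of occupied columns, $|F_1|+|F_2|-|F_1\cap F_2|-1\le 2k-t-1<n$, already yields a column empty in both, after which saturation plus the kernel inequality for $T$ finishes immediately.
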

\begin{proof}

\

\vspace{2mm}

$\bullet$ proof of $(i)$:

\vspace{2mm}

The facts that $S(i,m(i,T))(n,1)[....[S(i,m(i,T))(2,1)[S(i,m(i,T))(1,1)(F)]]...] \subseteq M(n,k)$ with cardinality $k$ for any $F \in {\mathcal F}$ and that $|{\mathcal F}'|=|{\mathcal F}|$, are trivial.

\vspace{2mm}

\begin{claim}
${\mathcal F}'$ is \textit{t-intersecting}.
\end{claim}

\begin{proof}

It is enough to prove that $S(i,m(i,T))(1,1)({\mathcal F})$ is $t$-intersecting and that $T$ is a $t$-kernel for the new family, 
i.e., $T \in {\mathcal K}(S(i,m(i,T))(1,1)({\mathcal F}))$, since repeatedly applying this fact we will get the claim.

\vspace{3mm}

Choose two arbitrary members. As usual, it is easy to handle the cases when both or neither is a member of the original family ${\mathcal F}$. 
Hence we can assume without loss of generality that we are given $F,G\in {\mathcal F}$ with $S(i,m(i,T))(1,1)(F) \ne F$ but $S(i,m(i,T))(1,1)(G) = G$. Then $(1,1) \not \in F$.

Now if $(1,1) \in G$ then the intersection (of the two set and the kernel $T$) increases by one and decreases by at most one, we are done. Otherwise $S(i,m(i,T))(1,1)(G) \in {\mathcal F}$, and then we have $t \le |S(i,m(i,T))(1,1)(G) \cap F \cap T| \le |S(i,m(i,T))(1,1)(F) \cap G \cap T|.$
\end{proof}

\vspace{2mm}

We are done with the proof of $(i)$ of \lref{comp}.

\vspace{2mm}

$\bullet$ proof of $(ii)$:

\vspace{2mm}
We choose $F,G \in {\mathcal F}$ and prove that $|F' \cap G' \cap (T \setminus (i,m(i,T)))|\ge t$:

\vspace{2mm}

\textit{Case 1}: if $$S(i,m(i,T))(n,1)[....[S(i,m(i,T))(2,1)[S(i,m(i,T))(1,1)(F)]]...]= F' \ne F \hskip0.2truecm or$$ $$S(i,m(i,T))(n,1)[....[S(i,m(i,T))(2,1)[S(i,m(i,T))(1,1)(G)]]...]=G' \ne G,$$

since $T$ is a \textit{t-kernel} for ${\mathcal F}$, however $(i,m(i,T)) \not \in F' \cap G'$, we are done similarly as in the previous claim.

\vspace{4mm}

\textit{Case 2}: if $$S(i,m(i,T))(n,1)[....[S(i,m(i,T))(2,1)[S(i,m(i,T))(1,1)(F)]]...]=F\hskip0.2truecm and$$  $$S(i,m(i,T))(n,1)[....[S(i,m(i,T))(2,1)[S(i,m(i,T))(1,1)(G)]]...]=G,$$

then

\vspace{2mm}

a) if $(i,m(i,T)) \not \in F \cap G$, we are done,

b) if $(i,m(i,T)) \in F \cap G$ then since $2 \le m(i,T)$ and $2k-t \le n$, there is $j \le n$ with $(1,j) \not \in F \cup G$.

\vspace{3mm}

Then as we are in the \textit{Case 2},
$S(i,m(i,T))(j,1)(F) \in {\mathcal F}$, so $$t \le |S(i,m(i,T))(j,1)(F) \cap G \cap T| = |F \cap G \cap (T \setminus (i,m(i,T)))|.$$

\vspace{5mm}

We are done with the proof of \lref{comp}.
\end{proof}

We are done with the proof of \tref{compression}.
\end{proof}

\section{Proof of \tref{EKRmulti}}

Let ${\mathcal G}_s = \{F \cap M(n,1): F \in f({\mathcal F}), \, |F \cap M(n,1)|=s \}$. Let us consider $G \in {\mathcal G}_s$ and examine the number of multisets $F\in {\mathcal F}$ with $G=F\cap M(n,1)$. Obviously $k-s$ further elements belong to $F$, and they are in the same $s$ columns, they can be chosen at most $\binom{s+(k-s-1)}{k-s}$ ways. Then we know that

$$|{\mathcal F}|=|f({\mathcal F})|\le \sum_{s=t}^{k}|{\mathcal G}_s|\binom{s+(k-s-1)}{k-s}=\sum_{s=t}^{k}|{\mathcal G}_s|\binom{k-1}{k-s}.$$

Now consider a family ${\mathcal F}'$ of sets on an underlying set of size $n+k-1$. Let it be the same on the first $n$-elements as $f({\mathcal F})$ in $M(n,1)$, and extend every $s$-element set there with all the $(k-s)$-element subsets of the remaining $k-1$ elements of the underlying set. It can happen $\binom{k-1}{k-s}$ ways, thus the cardinality of this family is the right hand side of the above inequality.

Note that ${\mathcal F}'$ is $t$-intersecting, hence its cardinality is at most $AK(n+k-1,k,t)$, which completes the proof of \tref{EKRmulti}.

\section{Concluding remarks}

Note that the bound given in \tref{EKRmulti} is sharp. Using a family ${\mathcal A}_{n+k-1,k,t,i}$ we can define an optimal $t$-intersecting family of $k$-multisets in $M(n,k)$. However, we do not know any nontrivial bounds in case of $n<2k-t$.

After repeated down-shifting we get a following structure theorem.
Let $e_i\in \mathbb{R}^n$ denote the standard unit vector with $1$ in its $i$'th coordinate.

\begin{lemma}[Stable extremal families] 
There exists a family ${\mathcal F} \in \mathcal{M}(n,\ell, k,t)$ of maximum cardinality satisfying 
 the following two properties: \\ --- $\forall i\neq j$ and $F\in {{\mathcal F}}, 
  \enskip m(i,F)+1 < m(j,F)$  imply that  \enskip $(F-e_j +e_i) \in {\mathcal F}$, too, and \\
--- the same holds if $i< j$ and $m(i,F)+1 \leq m(j,F)$.
  \end{lemma}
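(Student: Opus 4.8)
The plan is to obtain the desired family as a minimizer of the potential that already appears in $(\ref{eq:3})$. Among all families in $\mathcal{M}(n,\ell,k,t)$ of maximum cardinality (this maximum is a fixed number $N$), I would choose one, call it ${\mathcal F}$, minimizing
$$\Phi({\mathcal F}) = \sum_{F\in{\mathcal F}}\Big[N n k^2\sum_{i\in[n]}(m(i,F))^2+\sum_{i\in[n]} i\,(m(i,F))\Big].$$
Such a minimizer exists since there are only finitely many families of a given size. I claim this ${\mathcal F}$ works.

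First I would record that ${\mathcal F}$ is fixed by every symmetrization $\psi_{i,j}$, i.e.\ $\psi_{i,j}({\mathcal F})={\mathcal F}$ for all pairs $(i,j)$. Indeed $\psi_{i,j}$ preserves cardinality and the $t$-intersecting property (its intersections do not shrink, by \lref{intervalsystem}), and it cannot raise the maximal height, since pushing two columns toward the middle only lowers their extreme values; hence $\psi_{i,j}({\mathcal F})$ again lies in $\mathcal{M}(n,\ell,k,t)$ and is again of maximum size $N$. If $\psi_{i,j}({\mathcal F})\neq{\mathcal F}$ for some pair, then \ref{csokken} (with coefficient $|{\mathcal F}|=N$) would give $\Phi(\psi_{i,j}({\mathcal F}))<\Phi({\mathcal F})$, contradicting minimality. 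So every $\psi_{i,j}$ fixes ${\mathcal F}$.

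The remaining, and main, step is to translate this $\psi_{i,j}$-stability into the two pointwise closure conditions. Here I would work fiber by fiber: fix $i\neq j$ and a function $g$ on the other $n-2$ columns, and examine ${\mathcal F}_g$. Being of maximum size, ${\mathcal F}$ is maximal, so (as established in the first proof of \tref{compression}) the attained values $m(i,F)$ over $F\in{\mathcal F}_g$ form a block of consecutive integers, whence $\Psi({\mathcal F}_g)=I(s,Y)$ is a genuine interval system. The identity $\psi_{i,j}({\mathcal F})={\mathcal F}$ says precisely that $\varphi(Y)=Y$, i.e.\ that $Y$ is already centered about the middle of $X$. Decoding the centering through $\Psi$, where $m(i,F)$ determines the starting point $k-m(i,F)+1$ of the corresponding subinterval, I would verify that the block of attained pairs $(m(i,F),m(j,F))$ is exactly the balanced, left-leaning one: whenever a member has $m(j,F)\ge m(i,F)+2$, the neighbor obtained by lowering column $j$ and raising column $i$ by one lies in the block, and for $i<j$ the same already holds at a gap of one. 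Reading this off for every fiber and every pair $(i,j)$ yields the two asserted conditions.

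The step I expect to be the crux is this final decoding, namely matching the floor/ceiling asymmetry of $\varphi(Y)=\{k-\lfloor|Y|/2\rfloor,\dots,k+\lceil|Y|/2\rceil-1\}$ against the asymmetric thresholds in the statement (a gap $\ge 2$ in either direction, versus a gap $\ge 1$ only toward the smaller index). Concretely, centering forces the larger of the two column-maxima in ${\mathcal F}_g$ to exceed the other by exactly one or two according to the parity of $|Y|$, and it is precisely this one-or-two discrepancy that separates the strict inequality $m(i,F)+1<m(j,F)$ of the first condition from the relaxed $m(i,F)+1\le m(j,F)$ of the second. A careful but routine check at the two ends of each block then closes the argument, with no new idea needed beyond \lref{intervalsystem} and \ref{csokken}.
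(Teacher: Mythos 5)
The paper states this lemma without proof, so I am judging your argument against the machinery it would have to rest on. Your plan (minimize the potential of Lemma~\ref{csokken} over maximum families, deduce that every $\psi_{i,j}$ fixes the minimizer, then decode fiber by fiber) is the natural one, and the part of the decoding that yields the second, left-leaning condition does come out of the floor/ceiling asymmetry of $\varphi$ as you predict. But the step you yourself flag as the crux fails, because the paper's $\varphi$ is not centered where your decoding needs it to be. Concretely, take a fiber ${\mathcal F}_g$ consisting of a single multiset $F$ with $m(i,F)+m(j,F)=s$ even. Then $Y=\Psi(F)$ has $|Y|=s$, and $\varphi(Y)=\{k-s/2,\dots,k+s/2-1\}$ decodes to the pair $(m(i,F),m(j,F))=(s/2+1,\,s/2-1)$. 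This member satisfies $m(j,F)+1=s/2<s/2+1=m(i,F)$, so the first condition of the lemma (applied with the roles of $i$ and $j$ exchanged) demands that the multiset with values $(s/2,s/2)$ in these two columns also belong to ${\mathcal F}$ --- and it does not, since the fiber is a singleton and $\psi_{i,j}$ preserves fibers. So $\psi_{i,j}$-stability for all pairs does not imply the first closure condition; no amount of ``careful but routine checking at the ends of each block'' will close this, because the statement being checked is false for the paper's $\varphi$. (For blocks with at least two members the required neighbour does lie in the block, so the failure is confined to, but genuinely present at, singleton fibers of even weight.)

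Two further links in your chain break at the same edge case. First, $\psi_{i,j}$ can raise the maximal height: if $M_i=M_j=M$, then $|Y|=2M$ and the new column-$i$ maximum is $\lfloor|Y|/2\rfloor+1=M+1$, so membership in $\mathcal{M}(n,\ell,k,t)$ --- which is the class the lemma is about --- is not preserved, contrary to your assertion that pushing toward the middle only lowers extreme values. Second, for the same reason Lemma~\ref{csokken} is not available as a black box here: moving a singleton fiber from $(s/2,s/2)$ to $(s/2+1,s/2-1)$ increases $\sum_{i}(m(i,F))^2$ by $2$, so the dominant term of the potential grows and your minimizer need not be $\psi_{i,j}$-fixed at all. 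A correct proof has to either re-center $\varphi$ (replace it by a genuinely symmetric version, then re-verify \lref{intervalsystem}, the potential decrease, and the height bound), or, closer to what the paper gestures at in its Remark, combine the down-compression with the left-shift $c_{i,j}$ and show directly that a maximum family stable under all $c_{i,j}$ and all down-shifts is closed under the single-step move $F\mapsto F-e_j+e_i$; your proposal as written does neither.
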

Knowing the structure of ${\mathcal F}$ might help to determine $\max|{\mathcal F}|$ for all $n$.


The original Erd\H{o}s-Ko-Rado theorem (\tref{EKRorig}) concerns the maximum size of an independent set in the Kneser graph. A powerful method to estimate the size of an independent set was developed by Lov\'asz. Indeed, Wilson~\cite{Wi} extended the Erd\H{o}s-Ko-Rado theorem by determining the Shannon capacity of the generalized Kneser graph. It would be interesting if his ideas were usable for the multiset case, too.


\begin{thebibliography}{alpha}

\bibitem{AK} R. Ahlswede and L.H. Khachatrian: {\em The complete intersection theorem for systems of finite sets}, European J. Combin., 18(2) (1997), 125--136.
\bibitem{AK2} R. Ahlswede and L.H. Khachatrian: {\em The Diametric Theorem in Hamming Spaces -- Optimal Anticodes}, Advances in Applied Mathematics 20 (1998), 429--449.
\bibitem{BL} B. Bollob\'as and I. Leader: {\em Maximal sets of given diameter in the grid and the torus}, Discrete Mathematics 122 (1993), 15--35.
\bibitem{BK} G. Brockman and B. Kay: {\em Elementary Techniques for Erd\H{o}s-Ko-Rado-like Theorems}, http://arxiv.org/pdf/0808.0774v2.pdf.
\bibitem{DK} D.Z. Du and D.J. Kleitman: {\em Diameter and radius in the Manhattan metric},
Discrete Comput. Geom. 5 (1990), no. 4, 351--356.
\bibitem{EKR} P. Erd\H{o}s, C. Ko and R. Rado: {\em Intersection theorems for systems of finite sets}, Quart. J. Math. Oxford Ser. (2), 12 (1961), 313--320.
\bibitem{Fr} P.  Frankl: {\em The shifting technique in extremal set theory}, in: Surveys in Combinatorics, Lond. Math. Soc. Lect. Note Ser., 123 (1987), 81--110.
\bibitem{HR} A. Hajnal and B. Rothschild: {\em A generalization of the Erd\H{o}s-Ko-Rado theorem on finite set systems}, 
Journal of Combinatorial Theory, Ser. A 15 (1973), no.3, 359--362.
\bibitem{Ka} G.O.H. Katona: {\em Intersection theorems for systems of finite sets}, Acta Math. Hungar. 15 (1964), 329--337.
\bibitem{Kl} D.J. Kleitman: {\em On a combinatorial conjecture of Erd\H{o}s}, J. Combin. Theory 1 (1966), 209--214.
\bibitem{MP} K. Meagher and A. Purdy: {\em  An Erd\H{o}s-Ko-Rado theorem for multisets}, Electron. J. Combin. 18 (2011), no.1, Paper 220.
\bibitem{Me} M.S. Mel'nikov: {\em Dependence of volume and diameter of sets in an n-dimensional Banach space}, Uspehi Mat. Nauk 18 (1963) no. 4 (112), 165--170.
\bibitem{Wi} R.M. Wilson: {\em The exact bound on the Erd\H{o}s-Ko-Rado theorem}, Combinatorica, 4 (1984), 247--257.

\end{thebibliography}
\end{document}